\chardef\forshowkeys=0
\chardef\showllabel=0
\chardef\refcheck=0
\chardef\sketches=0
\author[M.~Aydin]{Mustafa Sencer Aydin}
\address[Aydin]{\newline
	Department of Mathematics, University of Southern California, Los Angeles, CA 90089}
\email[]{\href{maydin@usc.edu}{maydin@usc.edu}}
\author[Jayanti]{Pranava Chaitanya Jayanti}
\address[Jayanti]{\newline
	Department of Mathematics, University of Southern California, Los Angeles, CA 90089, USA}
\email[]{\href{pjayanti@usc.edu}{pjayanti@usc.edu}}
\title[Fractional regularity, global persistence, and asymptotics for the Boussinesq equations]{Fractional regularity, global persistence, and asymptotic properties of the Boussinesq equations on bounded domains}
\begin{document}
	
	\def\inprogress{{\colg IN PROGRESS} }
	\def\bnew{\colr {} NEW: }
	\def\enew{\colb {}}
	\def\bold{\colu {} OLD: }
	\def\eold{\colb{}}
	\def\YY{X}
	\def\OO{\mathcal O}
	\def\SS{\mathbb S}
	\def\CC{\mathbb C}
	\def\RR{\mathbb R}
	\def\TT{\mathbb T}
	\def\ZZ{\mathbb Z}
	\def\HH{\mathbb H}
	\def\RSZ{\mathcal R}
	\def\LL{\mathcal L}
	\def\SL{\LL^1}
	\def\ZL{\LL^\infty}
	\def\GG{\mathcal G}
	\def\tt{\langle t\rangle}
	\def\erf{\mathrm{Erf}}
	\def\veps{\varepsilon}
	\def\mgt#1{\textcolor{magenta}{#1}}
	\def\ff{\rho}
	\def\gg{G}
	\def\sqrtnu{\sqrt{\nu}}
	\def\ww{w}
	\def\ft#1{#1_\xi}
	\def\ges{\gtrsim}
	\renewcommand*{\Re}{\ensuremath{\mathrm{{\mathbb R}e\,}}}
	\renewcommand*{\Im}{\ensuremath{\mathrm{{\mathbb I}m\,}}}
	\ifnum\showllabel=0
	\def\llabel#1{\marginnote{\color{gray}\rm(#1)}[-0.0cm]\notag}
	\else
	\def\llabel#1{\notag}
	\fi
	\newcommand{\norm}[1]{\left\|#1\right\|}
	\newcommand{\nnorm}[1]{\lVert #1\rVert}
	\newcommand{\abs}[1]{\left|#1\right|}
	\newcommand{\NORM}[1]{|\!|\!| #1|\!|\!|}
	\newtheorem{theorem}{Theorem}[section]
	\newtheorem{Theorem}{Theorem}[section]
	\newtheorem{corollary}[theorem]{Corollary}
	\newtheorem{Corollary}[theorem]{Corollary}
	\newtheorem{proposition}[theorem]{Proposition}
	\newtheorem{Proposition}[theorem]{Proposition}
	\newtheorem{Lemma}[theorem]{Lemma}
	\newtheorem{lemma}[theorem]{Lemma}
	\theoremstyle{definition}
	\newtheorem{definition}{Definition}[section]
	\newtheorem{remark}[Theorem]{Remark}
	\def\theequation{\thesection.\arabic{equation}}
	\numberwithin{equation}{section}
	\definecolor{mygray}{rgb}{.6,.6,.6}
	\definecolor{myblue}{rgb}{9, 0, 1}
	\definecolor{colorforkeys}{rgb}{1.0,0.0,0.0}
	\newlength\mytemplen
	\newsavebox\mytempbox
	\makeatletter
	\newcommand\mybluebox{%
		\@ifnextchar[
		{\@mybluebox}%
		{\@mybluebox[0pt]}}
	\def\@mybluebox[#1]{%
		\@ifnextchar[
		{\@@mybluebox[#1]}%
		{\@@mybluebox[#1][0pt]}}
	\def\@@mybluebox[#1][#2]#3{
		\sbox\mytempbox{#3}%
		\mytemplen\ht\mytempbox
		\advance\mytemplen #1\relax
		\ht\mytempbox\mytemplen
		\mytemplen\dp\mytempbox
		\advance\mytemplen #2\relax
		\dp\mytempbox\mytemplen
		\colorbox{myblue}{\hspace{1em}\usebox{\mytempbox}\hspace{1em}}}
	\makeatother
	\def\XX{{\mathcal X}}
	\def\XXT{{\mathcal X}_T}
	\def\XXTzero{{\mathcal X}_{T_0}}
	\def\XXi{{\mathcal X}_\infty}
	\def\YY{{\mathcal Y}}
	\def\YYT{{\mathcal Y}_T}
	\def\YYTzero{{\mathcal Y}_{T_0}}
	\def\YYi{{\mathcal Y}_\infty}
	\def\cc{\text{c}}
	\def\rr{r}
	\def\weaks{\text{\,\,\,\,\,\,weakly-* in }}
	\def\inn{\text{\,\,\,\,\,\,in }}
	\def\cof{\mathop{\rm cof\,}\nolimits}
	\def\Dn{\frac{\partial}{\partial N}}
	\def\Dnn#1{\frac{\partial #1}{\partial N}}
	\def\tdb{\tilde{b}}
	\def\tda{b}
	\def\qqq{u}
	\def\lat{\Delta_2}
	\def\biglinem{\vskip0.5truecm\partialar==========================\partialar\vskip0.5truecm}
	\def\inon#1{\hbox{\ \ \ \ \ \ \ }\hbox{#1}}                
	\def\onon#1{\inon{on~$#1$}}
	\def\inin#1{\inon{in~$#1$}}
	\def\FF{F}
	\def\andand{\text{\indeq and\indeq}}
	\def\ww{w(y)}
	\def\ll{{\color{red}\ell}}
	\def\ee{\epsilon_0}
	\def\Leray{\mathbb{P}}
	\def\startnewsection#1#2{ \section{#1}\label{#2}\setcounter{equation}{0}}   
	\def\loc{\text{loc}}
	\def\nnewpage{ }
	\def\sgn{\mathop{\rm sgn\,}\nolimits}    
	\def\Tr{\mathop{\rm Tr}\nolimits}    
	\def\div{\mathop{\rm div}\nolimits}
	\def\curl{\mathop{\rm curl}\nolimits}
	\def\dist{\mathop{\rm dist}\nolimits}  
	\def\supp{\mathop{\rm supp}\nolimits}
	\def\indeq{\quad{}}           
	\def\partialeriod{.}                       
	\def\semicolon{\,;}                  
	\def\colr{\color{red}}
	\def\colrr{\color{black}}
	\def\colb{\color{black}}
	\def\coly{\color{lightgray}}
	\definecolor{colorgggg}{rgb}{0.1,0.5,0.3}
	\definecolor{colorllll}{rgb}{0.0,0.7,0.0}
	\definecolor{colorhhhh}{rgb}{0.3,0.75,0.4}
	\definecolor{colorpppp}{rgb}{0.7,0.0,0.2}
	\definecolor{coloroooo}{rgb}{0.45,0.0,0.0}
	\definecolor{colorqqqq}{rgb}{0.1,0.7,0}
	\def\colg{\color{colorgggg}}
	\def\collg{\color{colorllll}}
	\def\cole{\color{colorpppp}}
	\def\cole{\color{coloroooo}}
	\def\coleo{\color{colorpppp}}
	\def\colu{\color{blue}}
	\def\colc{\color{colorhhhh}}
	\def\colW{\colb}   
	\definecolor{coloraaaa}{rgb}{0.6,0.6,0.6}
	\def\colw{\color{coloraaaa}}
	\def\comma{ {\rm ,\qquad{}} }            
	\def\commaone{ {\rm ,\quad{}} }          
	\def\lec{\lesssim}
	\def\nts#1{{\color{red}\hbox{\bf ~#1~}}} 
	\def\ntsf#1{\footnote{\color{colorgggg}\hbox{#1}}}
	\def\ntsik#1{{\color{purple}\hbox{\bf ~#1~}}} 
	\def\blackdot{{\color{red}{\hskip-.0truecm\rule[-1mm]{4mm}{4mm}\hskip.2truecm}}\hskip-.3truecm}
	\def\bluedot{{\color{blue}{\hskip-.0truecm\rule[-1mm]{4mm}{4mm}\hskip.2truecm}}\hskip-.3truecm}
	\def\partialurpledot{{\color{colorpppp}{\hskip-.0truecm\rule[-1mm]{4mm}{4mm}\hskip.2truecm}}\hskip-.3truecm}
	\def\greendot{{\color{colorgggg}{\hskip-.0truecm\rule[-1mm]{4mm}{4mm}\hskip.2truecm}}\hskip-.3truecm}
	\def\cyandot{{\color{cyan}{\hskip-.0truecm\rule[-1mm]{4mm}{4mm}\hskip.2truecm}}\hskip-.3truecm}
	\def\reddot{{\color{red}{\hskip-.0truecm\rule[-1mm]{4mm}{4mm}\hskip.2truecm}}\hskip-.3truecm}
	\def\gdot{\greendot}
	\def\tdot{\gdot}
	\def\bdot{\bluedot}
	\def\ydot{\cyandot}
	\def\rdot{\reddot}
	\def\fractext#1#2{{#1}/{#2}}
	\def\ii{\hat\imath}
	\def\fei#1{\textcolor{blue}{#1}}
	\def\vlad#1{\textcolor{cyan}{#1}}
	\def\igor#1{\text{{\textcolor{colorqqqq}{#1}}}}
	\def\igorf#1{\footnote{\text{{\textcolor{colorqqqq}{#1}}}}}
	\newcommand{\UE}{U^{\rm E}}
	\newcommand{\partialE}{P^{\rm E}}
	\newcommand{\KP}{K_{\rm P}}
	\newcommand{\uNS}{u^{\rm NS}}
	\newcommand{\vNS}{v^{\rm NS}}
	\newcommand{\partialNS}{p^{\rm NS}}
	\newcommand{\omegaNS}{\omega^{\rm NS}}
	\newcommand{\uE}{u^{\rm E}}
	\newcommand{\vE}{v^{\rm E}}
	\newcommand{\omegaE}{\omega^{\rm E}}
	\newcommand{\ua}{u_{\rm   a}}
	\newcommand{\va}{v_{\rm   a}}
	\newcommand{\omegaa}{\omega_{\rm   a}}
	\newcommand{\ue}{u_{\rm   e}}
	\newcommand{\ve}{v_{\rm   e}}
	\newcommand{\omegae}{\omega_{\rm e}}
	\newcommand{\omegaeic}{\omega_{{\rm e}0}}
	\newcommand{\ueic}{u_{{\rm   e}0}}
	\newcommand{\veic}{v_{{\rm   e}0}}
	\newcommand{\up}{u^{\rm P}}
	\newcommand{\vp}{v^{\rm P}}
	\newcommand{\N}{\mathbb{N}}
	\newcommand{\tup}{{\tilde u}^{\rm P}}
	\newcommand{\bvp}{{\bar v}^{\rm P}}
	\newcommand{\omegap}{\omega^{\rm P}}
	\newcommand{\tomegap}{\tilde \omega^{\rm P}}
	\renewcommand{\up}{u^{\rm P}}
	\renewcommand{\vp}{v^{\rm P}}
	\renewcommand{\omegap}{\Omega^{\rm P}}
	\renewcommand{\tomegap}{\omega^{\rm P}}
	\begin{abstract}
		We address the long-time behavior of the 2D Boussinesq system, which consists of the incompressible Navier-Stokes equations driven by a non-diffusive density. We construct globally persistent solutions on a smooth bounded domain, when the initial data belongs to $(H^k\cap V)\times H^k$ for $k\in\N$ and $H^s\times H^s$ for $0<s<2$. The proofs use parabolic maximal regularity and specific compatibility conditions at the initial time. Additionally, we also deduce various asymptotic properties of the velocity and density in the long-time limit and present a necessary and sufficient condition for the convergence to a steady state.
	\end{abstract}
	\maketitle
	\tableofcontents
	
	\section{Introduction} \label{sec:introduction}
	We investigate the well-posedness and persistence of regularity of the two-dimensional incompressible, viscous Boussinesq equations without thermal diffusivity. The governing equations are given by
	\begin{equation} \label{eq:boussinesq equations}
		\begin{aligned}
			u_t - \Delta u + u \cdot \nabla u + \nabla p 
			&= 
			\rho e_2 , \\
			\rho_t + u \cdot \nabla \rho 
			&= 
			0 , \\
			\nabla \cdot u
			&=
			0 ,
		\end{aligned}
	\end{equation}
	subjected to the boundary and initial conditions
	\begin{equation} \label{eq:boundary and initial conditions}
		u |_{\partial\Omega} = 0 \comma (u,\rho)(0) = (u_0, \rho _0) ,
	\end{equation}
	where $\Omega \subseteq \mathbb{R}^2 $ is a smooth, open, and bounded domain. The Boussinesq equations in various forms have been extensively studied in physics and mathematics, finding applications in various fields such as oceanography, biophysics, and atmospheric turbulence modeling. Fundamentally, they are derived using the ``Boussinesq approximation,'' which posits that density variations have negligible effects on the velocity field except through buoyancy. This reduces the degree of nonlinearity of the system, leading to the incompressible Navier-Stokes with a (linear) density-dependent force that acts in the vertical direction alone. Since density variations in geophysical fluid dynamics are often a result of thermal fluctuations, it is also common to replace the density $\rho$ by the temperature $\theta$. The two fields are equivalent insofar as the Boussinesq approximation goes, and this is the reason for the label ``thermal diffusivity'' for the term $\kappa\Delta\rho$ that is sometimes employed in the continuity equation. Apart from physical applications, the Boussinesq equations have also generated much theoretical interest for their connections with the 3D Euler equations and the phenomenon of vortex stretching~\cite{DWZZ}.
	
	In the presence of a positive viscosity and a positive thermal diffusivity (in one or both directions), there are well-established results regarding the global existence of solutions~\cite{ACW,BrS,CW} and also blow-up criteria~\cite{QDY}. At the other end of the spectrum, the questions of global well-posedness for the system \eqref{eq:boussinesq equations} in the inviscid and non-diffusive case is partially unresolved. Progress in this direction includes weak solutions, local existence, finite time singularities, and blow-up criteria; see~\cite{BS,CH1,CH2,CKN,CN,EJ,HKZ2}. The most popular class of models in the literature correspond to the ``viscous and non-diffusive'' case. Among the first results in this regard (on the whole space $\mathbb{R}^2$) were due to Chae \cite{C} and Hou and Li \cite{HL}, who obtained the global existence and persistence of the regularity with $H^{k} \times H^{k-1}$ initial data with $k = 3, 4, \ldots$. For $k=2$, \cite{HKZ1} established the persistence of the regularity for Dirichlet and periodic boundary conditions. In a recent work, \cite{AKZ} proved the global well-posedness of the system with initial data in $D(A)\times H^1$, ensuring that the regularity of solutions persists over time with several additional asymptotic properties. The case of initial data in $H\times L^2$ was addressed in~\cite[Section 5]{H}, while some higher regularity results were established in~\cite{Ju}. Other interesting works involving strong solutions for the viscous and non-diffusive case include~\cite{DWZZ,HK1,KW1,KW2,KWZ,LLT}. In~\cite{DP}, Danchin and Paicu established global-in-time weak solutions for dimensions $N\ge 2$, in both Sobolev and Besov spaces. Another interesting limiting case is that of no viscosity, and yet a positive diffusivity~\cite{HK2,HKR,HKZ2}. The study of the Boussinesq equations has taken other paths too: global attractors~\cite{BFL}, inviscid/non-diffusive limits~\cite{BS}, critical fractional dissipation~\cite{HS,JMWZ,SW}, temperature dependent viscosity and diffusivity~\cite{WZ}, Navier boundary conditions~\cite{HW,HWW+}, stability analysis~\cite{JK,KLN}, norm growth~\cite{KPY}, thin domains~\cite{S}, and several others~\cite{CD,CG,J,KTW}. While most of the aforementioned works are in 2D, some involve more dimensions~\cite{BS,BrS,DP,QDY,W,WY}. For a comprehensive review of the results pertaining to the Boussinesq equations, see~\cite{ACSetal,KLN}.
	
	In this work, we provide an affirmative answer to the question of existence and uniqueness for $(H^k\cap V) \times H^k$ initial data (Theorem~\ref{T01}). This result can be compared to~\cite{LPZ} where the authors used $H^3\times H^3$ data to show global strong solutions (under certain compatibility conditions on the spatial regularity of the initial data), while our approach imposes time regularity on the solution at $t=0$. In the case of zero thermal diffusivity, other attempts~\cite{C,HL} have often worked with initial velocity with (one derivative) higher regularity than the initial density. We are able to overcome this gap, by showing that it is sufficient for the initial velocity and density to have identical regularity. It must be noted that the results for $k=1,2$ were already obtained in~\cite{Ju}. However, to the best of our knowledge, the corresponding results for arbitrary $k\in\N$ (on bounded domains) has remained open until now.
	
	The main challenge, when dealing with $H^1$ initial data, is the continuity equation. Since the density evolves via transport-type dynamics, the existence of a strong solution requires that the $W^{1,\infty}$ norm of the velocity be integrable in time. To achieve this, \cite{AKZ} use the maximal parabolic regularity of the Stokes equation with the initial data in $D(A)$. Here, we show that the same technique works even when the initial data is only marginally more regular than $L^2$ (Lemma~\ref{lem:unique particle trajectories}). The approach, inspired by~\cite{DM}, involves trading space integrability of the velocity for time integrability. Furthermore, by adapting the techniques employed in~\cite{AKZ}, we present a concise proof of the persistence of regularity in the $D(A)\times H^2$ space. Subsequently, we generalize our result to arbitrarily high regularity, demonstrating the persistence of the solution in $H^k\times H^k$. Certain compatibility criteria on the time-regularity of the solution at $t=0$ are necessary (see~\cite{T3}), which motivates the functional setting we employ. An induction argument, combined with the preceding results for $D(A)\times H^2$, yields the sought after solution.

    In this paper, we also tackle the situation of global persistence when the initial data has a regularity of fractional order (Theorem~\ref{T02}) . Specifically, for $0<s<2$, we consider the initial data $(u_0,\rho_0)\in D(A^{\frac{s}{2}})\times H^r$, where $A$ is the Stokes operator and $r = \lfloor s \rfloor$ or $s$. The lack of dissipation in the continuity equation poses a problem, but it is handled via extension to $\RR^2$ followed by commutator estimates to handle the highest order derivatives. The case when $r=\lfloor s \rfloor$ serves as a fractional analogue to the initial data in $H^k\times H^{k-1}$, i.e., when the density has lower regularity than the velocity.

    Finally, for initial data in $D(A)\times H^2$, we establish some asymptotic properties of the solution, as $t\to\infty$ (Theorem~\ref{T05}). In particular, we show that $Au - \Leray(\rho e_2)$ vanishes in the $H^1$ norm as $t\to\infty$ (where $\Leray$ is the Leray projector), as does the curl-free, trace-free part of $\nabla p-\rho e_2$. The former is an improvement to an analogous result in~\cite{AKZ}, since we have more regular density. Additionally, we also present a necessary and sufficient condition for $\rho$ to converge to a steady state, which is identified to be that of a stratified fluid. If and when said steady state is achieved, we show that there only exists a vertical pressure gradient, which exactly balances out the force due to the stratified density field. Moreover, in such a situation, the velocity also asymptotically vanishes in the $H^2$ norm, which confirms the intuition that the system $(u,\rho)$ would eventually down to $(0,\overline{\rho})$, where $\overline{\rho}$ does not depend on the horizontal coordinate.

	\section{Preliminaries and the main results} \label{sec:prelim and main results}
	
	We now define the function spaces and the notation that appear throughout this paper, before presenting the main results. With $\Omega \subset \mathbb{R}^2$, a smooth bounded domain, we introduce the spaces
	\begin{equation} \label{eq:defining H and V}
		\begin{aligned}
			& H 
			= 
			\{u \in L^2(\Omega) : \nabla \cdot u = 0 \text{ in } \Omega, u \cdot n = 0 \text{ on } \partial \Omega\}, \\
			&V 
			= 
			\{u \in H^1_0 (\Omega) : \nabla \cdot u = 0 \text{ in } \Omega\} ,
		\end{aligned}
	\end{equation}
	where $n$ represents the outward unit normal vector ~\cite{CF,T1,T2}. Additionally, we define the Stokes operator
	\begin{equation} \label{eq:defining Stokes operator}
		A 
		= 
		-\mathbb{P} \Delta ,
	\end{equation}
	where $D(A) =H^2(\Omega) \cap V$ denotes its domain and 
	$\mathbb{P}\colon L^2 \to H$ refers to the Leray projector. In what follows, for a fixed $T>0$, $L^p V$ is understood as $L^p([0,T],V)$, $C V$ as $C([0,T],V)$, and so on. Applying the Leray projector $\mathbb{P}$ to the first equation in \eqref{eq:boussinesq equations},
	we may express the momentum equation as
	\begin{equation} \label{eq:Leray projected momentum equation}
		u_t + Au + \mathbb{P}(u \cdot \nabla u)
		= 
		\mathbb{P}(\rho e_2) ,
	\end{equation}
	which represents the usual equivalent formulation for the first and third equations in ~\eqref{eq:boussinesq equations}. In ~\cite{AKZ}, the following regularity theorem was proven for the Boussinesq system.
	
	\begin{Theorem}[{\cite[Theorem 2.1]{AKZ}} Global persistence with $D(A)\times H^1$ data]
		\label{T00}
		Assume that $(u_0,\rho_0) \in D(A) \times H^{1}$. Then, for any $T\in (0,\infty)$,
		\begin{enumerate}
			\item the Boussinesq system \eqref{eq:boussinesq equations} has a unique solution with the regularity
			\begin{equation}
				\begin{gathered}
					u 
					\in 
					CV \cap L^1 W^{1,\infty} \cap L^2 D(A) \cap L^3 W^{2,3} \cap W^{1,\infty} L^2 \cap H^2 V' , \\
					\rho 
					\in 
					C H^1 \cap H^1 L^2 ,
				\end{gathered}
			\end{equation}
			\item the solution satisfies
			\begin{equation} \label{eq:T00 auxiliary estimates for velocity}
				\begin{gathered}
					\lim_{t\rightarrow\infty} \norm{\nabla u(t)}_{L^2} 
					= 
					0 , \\
					\lim_{t\rightarrow\infty} \norm{A u(t) - \Leray(\rho(t) e_2)}_{L^2} 
					= 
					0 , \\
					\norm{Au(t)}_{L^2} 
					\le 
					C ,
				\end{gathered}
			\end{equation}
			and for every $\epsilon>0$, there exists $C_{\epsilon}>0$ such that
			\begin{equation}
				\norm{\rho(t)}_{H^1} 
				\le 
				C_{\epsilon}e^{\epsilon t} ,
			\end{equation}
			where $C$ and $C_{\epsilon}$ depend on the size of the initial data, and
			\item the function $\Leray(\rho(t)e_2)$ weakly converges to $0$ in $H$, as $t\rightarrow\infty$.
		\end{enumerate}
	\end{Theorem}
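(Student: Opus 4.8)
The plan is to construct a global-in-time solution by combining local existence with uniform-in-time a priori estimates, the delicate point being the purely transported structure of the density. Since $\rho$ is merely advected, its regularity can be controlled only once the velocity is shown to lie in $L^1 W^{1,\infty}$; this integrability is exactly what renders the flow map of $u$ well-defined and Lipschitz, hence guarantees unique particle trajectories. First I would record the controlled quantities: because $\nabla\cdot u=0$ and $u$ vanishes on $\partial\Omega$, the transport equation preserves every Lebesgue norm, so $\norm{\rho(t)}_{L^p}=\norm{\rho_0}_{L^p}$, and in particular $\norm{\rho(t)}_{L^2}$ is constant. Testing \eqref{eq:Leray projected momentum equation} with $u$ and invoking Poincaré's inequality together with the conserved $\norm{\rho}_{L^2}$ yields a uniform bound on $\norm{u(t)}_{L^2}$ and a finite $\int_0^T\norm{\nabla u}_{L^2}^2\,dt$.

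The crux is upgrading this to $u\in L^1 W^{1,\infty}$. Here I would use maximal parabolic regularity for the Stokes operator: writing $u_t+Au=\Leray(\rho e_2)-\Leray(u\cdot\nabla u)$ with $u_0\in D(A)$, maximal regularity in $L^3([0,T],L^3)$ gives
\begin{equation}
\norm{u_t}_{L^3 L^3}+\norm{Au}_{L^3 L^3}\lec \norm{u_0}_{D(A)}+\norm{\Leray(\rho e_2)}_{L^3 L^3}+\norm{\Leray(u\cdot\nabla u)}_{L^3 L^3}.
\end{equation}
By elliptic regularity $\norm{Au}_{L^3}\sim\norm{u}_{W^{2,3}}$, and in two dimensions the Sobolev embedding $W^{2,3}\hookrightarrow W^{1,\infty}$ then delivers $u\in L^3 W^{1,\infty}\subset L^1 W^{1,\infty}$ on $[0,T]$. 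The buoyancy forcing is controlled through $\rho\in C H^1\hookrightarrow C L^3$, while the nonlinearity is absorbed by a continuation argument in the spirit of~\cite{DM}, trading space integrability for time integrability so that the $L^3 L^3$-norm of $u\cdot\nabla u$ closes on a short interval which is then bootstrapped to all of $[0,T]$.

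With $u\in L^1 W^{1,\infty}$ in hand I would differentiate the density equation, test with $\nabla\rho$, and use
\begin{equation}
\tfrac12\tfrac{d}{dt}\norm{\nabla\rho}_{L^2}^2\le \norm{\nabla u}_{L^\infty}\norm{\nabla\rho}_{L^2}^2,
\end{equation}
so Grönwall gives $\norm{\rho(t)}_{H^1}\le\norm{\rho_0}_{H^1}\exp\bigl(C\int_0^t\norm{\nabla u}_{L^\infty}\,ds\bigr)$; the $C_\epsilon e^{\epsilon t}$ bound then follows once the cumulative $W^{1,\infty}$ norm is shown to have vanishing time-average. Uniqueness comes from an $L^2$ difference estimate, where the transport term is handled precisely by the $W^{1,\infty}$ bound. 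For the asymptotics, the decay $\norm{\nabla u(t)}_{L^2}\to0$ follows from $\int_0^\infty\norm{\nabla u}_{L^2}^2\,dt<\infty$ combined with a uniform bound on $\tfrac{d}{dt}\norm{\nabla u}_{L^2}^2$ obtained by testing with $Au$. Since $\norm{\Leray(\rho e_2)}_{L^2}\le\norm{\rho_0}_{L^2}$ is automatically bounded, the uniform estimate $\norm{Au(t)}_{L^2}\le C$ is consistent with reading $\norm{Au(t)-\Leray(\rho e_2)}_{L^2}\to0$ off the identity $u_t+Au-\Leray(\rho e_2)=-\Leray(u\cdot\nabla u)$, once $u_t$ and $\Leray(u\cdot\nabla u)$ are shown to vanish in $L^2$. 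Finally, the weak convergence $\Leray(\rho e_2)\rightharpoonup0$ in $H$ is extracted by pairing the momentum equation with a fixed $\phi\in H$ and letting $t\to\infty$: the Stokes and nonlinear terms vanish because $\norm{\nabla u}_{L^2}\to0$, and the $\langle u_t,\phi\rangle$ contribution vanishes in a time-averaged sense.

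I expect the main obstacle to be closing the $L^1 W^{1,\infty}$ estimate for the velocity without any smoothing in the density equation: the nonlinear Stokes forcing must be controlled in the critical $L^3 L^3$ scale with constants compatible with \emph{global} rather than merely local continuation, and the interplay with the only-transported, hence non-decaying, density field is what makes this genuinely delicate. The asymptotic statements inherit this difficulty, since the decay of $u$ must be wrung out of estimates that do not see any dissipation of $\rho$.
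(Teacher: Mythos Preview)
This theorem is not proved in the present paper: it is quoted verbatim as \cite[Theorem~2.1]{AKZ} and used as a black box throughout (see the sentence ``In~\cite{AKZ}, the following regularity theorem was proven for the Boussinesq system'' preceding the statement). There is therefore no in-paper proof against which to compare your proposal.

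That said, your sketch is consistent with what the paper \emph{describes} as the method of \cite{AKZ}. In particular, the paper explicitly remarks that \cite{AKZ} obtains $u\in L^1 W^{1,\infty}$ ``via the $L^{3}W^{2,3}$ estimate on the velocity, which however, requires the initial velocity to be more regular than~$H^1$,'' and that this is done through maximal parabolic regularity for the Stokes problem with data in $D(A)$. This is exactly the mechanism you propose, and the subsequent Gr\"onwall argument for $\norm{\nabla\rho}_{L^2}$ is the standard one (it appears in Section~\ref{sec:gradient of the density} of the present paper as well).

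Where your sketch is thin is precisely where the work lies. First, closing the $L^3 L^3$ bound on $u\cdot\nabla u$ is not a short-time continuation issue to be ``bootstrapped''; one needs a genuine a~priori estimate that is uniform on $[0,T]$, and the paper's own contribution (Lemma~\ref{lem:unique particle trajectories}) shows how delicate this step is even in the easier $L^{1+\delta}W^{2,2+\varepsilon}$ scale. Second, the subexponential bound $\norm{\rho(t)}_{H^1}\le C_\epsilon e^{\epsilon t}$ requires showing that $\int_0^t\norm{\nabla u}_{L^\infty}\,ds$ grows sublinearly, which you assert but do not argue; this is not automatic from $\int_0^\infty\norm{\nabla u}_{L^2}^2<\infty$ alone. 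Third, your argument for the weak convergence $\Leray(\rho e_2)\rightharpoonup 0$ is incomplete: pairing with a fixed $\phi\in H$ and using $\norm{\nabla u}_{L^2}\to 0$ handles the nonlinear term, but $\langle u_t,\phi\rangle$ does not obviously vanish pointwise, and ``time-averaged'' vanishing is not the claim. For the actual arguments you should consult \cite{AKZ} directly.
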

	
	In this paper, we establish several other global persistence results, beginning from different classes of initial conditions. The first one among them considers the case when $(u_0,\rho_0)\in (H^k\cap V)\times H^k$, for $k\in\N$.

	\begin{Theorem}[Global persistence with $(H^k\cap V)\times H^k$ data] \label{T01}  
        Given any $T\in (0,\infty)$ and initial conditions $(u_0,\rho_0)$, the Boussinesq system~\eqref{eq:boussinesq equations} has a unique global-in-time solution $(u,\rho)$ on $[0,T]$ for all of the following cases: 
        \begin{enumerate}[(i)]
		    \item $k=1$: $(u_0,\rho_0) \in V\times H^1$. Then, 
            \begin{equation} \label{eq:solution regularity for k=1}
            \begin{gathered}
                (u,\rho)\in (CV\cap L^1 W^{1,\infty}\cap L^{\infty}D(A)) \times CH^1 , \\
                (u_t, \rho_t) \in CV' \times CH^{-1} .
            \end{gathered}
            \end{equation}
            Furthermore, all of the asymptotic properties from Theorem~\ref{T00} remain valid here (except for the boundedness of $\Vert Au(t)\Vert_{L^2}$, which only holds for $t>0$).

            \item $k=2$: $(u_0,\rho_0) \in D(A)\times H^2$. Then, 
            \begin{equation} \label{eq:solution regularity for k=2}
            \begin{gathered}
                (u,\rho)\in (CD(A)\cap L^1 W^{1,\infty}) \times CH^2 , \\
                (u_t, \rho_t) \in CH \times CL^2 .
            \end{gathered}
            \end{equation}

            \item $k\ge 3$: $(u_0,\rho_0) \in (H^k\cap V)\times H^k$, and the compatibility conditions in~\eqref{eq:compatibility conditions for data} hold. Then, 
            \begin{equation} \label{eq:solution regularity for k>=3}
                (u,\rho)\in W_k \times \tilde{W}_k \subseteq CH^k \times CH^k ,
            \end{equation}
            where the spaces $W_k$ and $\tilde{W}_k$ are defined in~\eqref{eq:defining W_k and Tilde W_k}.
		\end{enumerate}
	\end{Theorem}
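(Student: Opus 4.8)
My plan is to build the argument around the case $k=2$, which sits closest to the already-established Theorem~\ref{T00}, and then to obtain $k=1$ by relaxing the velocity regularity and $k\ge 3$ by induction upward from $k=2$; I describe them in that logical order rather than in the stated order. For $k=2$, since $D(A)\times H^2\subset D(A)\times H^1$, Theorem~\ref{T00} already supplies a unique solution with $u\in CV\cap L^2 D(A)\cap L^3 W^{2,3}\cap L^1 W^{1,\infty}$ and $\rho\in CH^1$, so the only task is to upgrade this to $u\in CD(A)$ and $\rho\in CH^2$. For the density I differentiate the transport equation twice and test against $D^2\rho$; the transport term $\int u\cdot\nabla(D^2\rho)\,D^2\rho$ vanishes because $\div u=0$ and $u\cdot n|_{\partial\Omega}=0$, leaving the commutator $[D^2,u\cdot\nabla]\rho\sim\nabla u\,\nabla^2\rho+\nabla^2 u\,\nabla\rho$. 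The first piece is controlled by $\|\nabla u\|_{L^\infty}\in L^1_t$, and the second by $\|\nabla^2 u\|_{L^3}\|\nabla\rho\|_{L^6}$, where $\nabla^2 u\in L^3 L^3\subset L^1_t L^3$ comes for free from the $L^3 W^{2,3}$ bound of Theorem~\ref{T00} and $\|\nabla\rho\|_{L^6}\lec\|\rho\|_{H^2}$ in $2$D. A Grönwall inequality then gives $\|\rho(t)\|_{H^2}^2\le\|\rho_0\|_{H^2}^2\exp\!\big(C\!\int_0^t(\|\nabla u\|_{L^\infty}+\|\nabla^2 u\|_{L^3})\big)$ with both integrals finite, so $\rho\in CH^2$; feeding the now-$H^2$ forcing $\mathbb{P}(\rho e_2)$ into parabolic maximal regularity for the time-differentiated Stokes equation (the compatibility $u_t(0)=-Au_0-\mathbb{P}(u_0\cdot\nabla u_0)+\mathbb{P}(\rho_0 e_2)\in H$ holding automatically, since $H^2\cap V\hookrightarrow W^{1,\infty}$ in $2$D) then yields $u_t\in CH$ and hence $u\in CD(A)$.

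For $k=1$ the velocity datum $u_0\in V$ is less regular than in Theorem~\ref{T00}, so I would construct the solution by approximation: pick $u_0^n\in D(A)$ with $u_0^n\to u_0$ in $V$, apply Theorem~\ref{T00} to $(u_0^n,\rho_0)$, and pass to the limit using a priori bounds depending only on $\|u_0\|_V$ and $\|\rho_0\|_{H^1}$. The energy and $V$-level estimates give $u\in L^\infty V\cap L^2 D(A)$ uniformly, but the crux is recovering $u\in L^1 W^{1,\infty}$ from merely $V$ data. Here I follow the Danchin--Mucha philosophy of Lemma~\ref{lem:unique particle trajectories} and trade space integrability for time integrability through maximal regularity in $L^p W^{2,q}$ with $q>2$ and $p$ just above $1$, chosen so that the trace space obeys $\tfrac1p+\tfrac1q\ge 1$ and hence contains $V$; then $W^{2,q}\hookrightarrow W^{1,\infty}$ and $L^p([0,T])\hookrightarrow L^1([0,T])$ produce the bound. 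With $u\in L^1 W^{1,\infty}$ the flow is well defined and $\|\rho(t)\|_{H^1}\le\|\rho_0\|_{H^1}\exp(C\!\int_0^t\|\nabla u\|_{L^\infty})$ gives $\rho\in CH^1$; the time-derivative memberships in~\eqref{eq:solution regularity for k=1} read off from the equations, and uniqueness follows from a standard energy difference estimate. Finally, since the system smooths instantaneously ($u(t)\in D(A)$ for every $t>0$), the asymptotics of Theorem~\ref{T00} transfer verbatim, with $\|Au(t)\|_{L^2}$ bounded only for $t>0$.

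For $k\ge 3$ I argue by induction on $k$ with base case $k=2$. Granting an $H^{k-1}$ solution, I differentiate the density equation $k$ times: the top commutator term $D^k u\,\nabla\rho$ is absorbed using $\int_0^T\|D^k u\|_{L^q}\,dt$ (again from maximal regularity, the forcing now lying in $H^k$), while every other term in $[D^k,u\cdot\nabla]\rho$ carries at most $k-1$ derivatives on some factor and is handled by the inductive hypothesis together with $\nabla u\in L^1_t L^\infty$; simultaneously, differentiating the Stokes equation and applying maximal regularity propagates $u$ into $W_k$. The genuinely new difficulty at this level is the compatibility conditions~\eqref{eq:compatibility conditions for data}: the iterated time-traces $\partial_t^j u(0)$, computed formally from~\eqref{eq:Leray projected momentum equation}, must lie in the correct spaces and respect the Dirichlet condition, and it is precisely these (cf.~\cite{T3}) that let maximal regularity upgrade the a priori bounds to genuine time-continuity, giving $u\in W_k\subseteq CH^k$ and $\rho\in\tilde W_k\subseteq CH^k$. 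I expect the two principal obstacles to be exactly the borderline $L^1 W^{1,\infty}$ bound in the $k=1$ case, where the semigroup smoothing from $V$ data must be extracted by the space-for-time trade, and, for $k\ge 3$, verifying that the compatibility conditions propagate the estimates to continuity in $H^k$.
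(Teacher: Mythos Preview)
Your proposal is correct and shares the paper's core ideas---the maximal-regularity ``space-for-time'' trade to recover $u\in L^1W^{1,\infty}$ from sub-$D(A)$ data, commutator estimates on the transport equation for the density, and Temam's compatibility framework~\cite{T3} for $k\ge3$---but it differs in a few organizational choices worth noting. For $k=2$ you exploit the $L^3W^{2,3}$ bound already furnished by Theorem~\ref{T00} to control $D^2u\,\nabla\rho$, whereas the paper re-derives a $L^{1+\delta}W^{2,2+\varepsilon}$ bound (Lemma~\ref{lem:unique particle trajectories}) and uses that instead; your route is slightly shorter here. For $k=1$ you regularize $u_0$ into $D(A)$ and invoke Theorem~\ref{T00} on the approximants, while the paper builds a Galerkin scheme from scratch; both are standard and yield the same uniform bounds, though the paper's Galerkin construction makes the passage to the limit and the time-continuity argument for $\rho$ more explicit. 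For $k\ge3$ you propose to launch the induction from $k=2$, whereas the paper takes $k=3$ as the base case by citing~\cite{LPZ} and only then runs the inductive step; your choice is more self-contained but requires checking that the inductive machinery (Theorem~\ref{T04}) already applies from $k=2$ to $k=3$, which it does. One point to tighten in your sketch: in the inductive step the paper first upgrades $u$ to $W_{k+1}$ using only $\rho\in\tilde W_k$ (the forcing need only lie in $W_{k-1}$, not $H^k$), and only afterwards pushes $\rho$ to $H^{k+1}$; your phrasing ``the forcing now lying in $H^k$'' suggests the reverse order and risks circularity, so be sure to sequence $u$ before $\rho$ as the paper does.
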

	
	Some remarks about these results are in order.
    \begin{remark}
        We emphasize the persistence aspect of the theorem; namely, if the initial datum $(u_0,\rho_0)$ belongs to $(H^k\cap V)\times H^k$, then the solution $(u(t),\rho(t))$ belongs to $(H^k\cap V)\times H^k$ for all $t\ge 0$.
    \end{remark}
    \begin{remark}
        It follows from the proof of~\eqref{eq:solution regularity for k=1} that the velocity $u(t)$ belongs to $D(A)$ for almost every $t>0$. In fact, $u \in L^2_{\loc}(0,\infty;D(A))$, meaning that we may apply Theorem ~\ref{T00} to deduce that our solution belongs to a finer space.
    \end{remark}
    \begin{remark}
        Combining Theorem~\ref{T01}(i) with Theorem~\ref{T00}, under the assumption that the initial data belongs to $D(A)\times H^2$, essentially proves part (ii). It should be noted that the improvement here is the $H^2$ persistence of the density. In fact, Theorem~\ref{T00} shows that the solution stays in $D(A)\times H^1$, if it emanates from an initial data belonging to the same space.
    \end{remark}
    \begin{remark}
        In Theorem~\ref{T01}(iii), we require the (time derivatives of the) initial conditions to satisfy some compatibility criteria discussed in Section~\ref{sec:proof of H^k x H^k persistence}. This added restriction actually returns in the form of increased regularity of the solution in terms of the spaces $W_k$ and $\tilde{W}_k$. To the best of our knowledge, this result is entirely new for the case of bounded domains.
    \end{remark}
    \begin{remark}
        Parts (i) and (ii) of Theorem~\ref{T01} have been proven in~\cite[Sections 4 and 5]{Ju}. We still present them here for two reasons. Firstly, they are used as the base cases for an induction argument for part (iii), so it makes our proof self-contained. Secondly, the approach used in~\cite{Ju} is different from ours: they employ a spectral decomposition similar to Littlewood-Paley, whereas we use the technique of parabolic maximal regularity.
    \end{remark}
	
	To prove Theorem~\ref{T01}, we employ a Galerkin scheme and derive a priori estimates for the solutions to an approximate system. We may then pass to the limit of the approximation to obtain the desired solutions. One of the key steps involves controlling the quantity $\Vert \nabla u\Vert_{L^1 L^{\infty}}$. In~\cite{AKZ}, this is achieved via the $L^{3}W^{2,3}$ estimate on the velocity, which however, requires the initial velocity to be more regular than~$H^1$. Here, we significantly weaken this restriction to just requiring that $u_0\in H^s$ for any $s>0$. This is done by trading the $L^{3}W^{2,3}$ estimate for the weaker $L^{1+\delta}W^{2,2+\veps}$ norm (for appropriate choices of $\delta,\veps>0$), and taking advantage of the smoothing effect of the Navier-Stokes equation. As a consequence of this maximal parabolic regularity, we are able to control $u$ in certain Sobolev spaces that are continuously embedded into~$L^1 W^{1,\infty}$. 
 
    Next, we present a global persistence theorem for spaces of fractional Sobolev regularity. We note that for any $s\in (0,2)$ we have $D(A^{\frac{s}{2}}) = H_0^s\cap H$; see~\cite{FHR}. Then, our precise result is as follows.

	\begin{Theorem}[Global persistence with fractional regularity data] \label{T02}  
        Given $(u_0,\rho_0)\in D(A^{\frac{s}{2}})\times H^r$, where $r=\lfloor s\rfloor$ or $s$, the Boussinesq system~\eqref{eq:boussinesq equations} has a unique global-in-time solution satisfying 
        \begin{equation} \label{eq:solution regularity for lower fractional full}
            \begin{gathered}
                (u,\rho)\in (CD(A^{\frac{s}{2}})\cap L^1W^{1,\infty}\cap L^{\infty}D(A^{\frac{s+1}{2}})) \times C H^r , \\
                (u_t, \rho_t) \in CD(A^{\frac{s-1}{2}}) \times CH^{r-1} .
            \end{gathered}
        \end{equation}
        The case $s\in (0,1)$ is referred to as ``lower fractional regularity,'' while the case $s\in (1,2)$ is referred to as the ``higher fractional regularity''.
	\end{Theorem}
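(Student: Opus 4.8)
The plan is to construct the solution by a Galerkin approximation (following the same scheme as in the proof of Theorem~\ref{T01}), to derive a priori bounds that are uniform in the truncation parameter, to pass to the limit by compactness, and finally to establish uniqueness through a difference estimate in a weaker norm. It is convenient to work with the projected momentum equation~\eqref{eq:Leray projected momentum equation}, written as $u_t + Au = F$ with $F := \Leray(\rho e_2) - \Leray(u\cdot\nabla u)$, and to recall that $D(A^{\frac{s}{2}}) = H_0^s\cap H$ for $s\in(0,2)$, so that controlling $u$ in $D(A^{\frac{s}{2}})$ is the same as controlling it in $H^s$. Since $D(A^{\frac{s}{2}})\times H^r \subset H\times L^2$, the basic energy identity already supplies $u\in L^\infty H\cap L^2 V$ and $\rho\in L^\infty L^2$; everything below is an upgrade of this baseline.

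The single most important a priori estimate is membership of $u$ in $L^1 W^{1,\infty}$, which is what makes the transport equation for $\rho$ tractable. Because $u_0\in D(A^{\frac{s}{2}})\subset H^s$ with $s>0$, the datum is marginally more regular than $L^2$, and I would invoke the mechanism of Lemma~\ref{lem:unique particle trajectories}: using the maximal parabolic regularity of the Stokes semigroup, one trades a small amount of spatial integrability for time integrability and obtains $u\in L^1 W^{1,\infty}$ on each $[0,T]$, with a bound depending only on the size of the data. The essential point is that this bound uses only $u_0\in H^s$ and $\rho\in L^2$, so it is available before any higher regularity of $\rho$ is known and decouples the transport control from the density propagation. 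With $\int_0^T\norm{\nabla u}_{L^\infty}\,dt<\infty$ secured, I would propagate the fractional regularity of the density. As the continuity equation is a pure transport equation with no smoothing, I would extend $u$ and $\rho$ to $\RR^2$ (possible since $\partial\Omega$ is smooth), apply the homogeneous fractional derivative of order $r$ to $\rho_t + u\cdot\nabla\rho = 0$, and pair the result with $(-\Delta)^{r/2}\rho$. Using $\nabla\cdot u = 0$ to annihilate the transported leading term, the only surviving contribution is the commutator $[(-\Delta)^{r/2}, u\cdot\nabla]\rho$, which a Kato--Ponce/Kenig--Ponce--Vega commutator estimate bounds by $\norm{\nabla u}_{L^\infty}\norm{\rho}_{H^r}$ (plus strictly lower-order terms). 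A Gr\"onwall argument then gives
\begin{equation}
\norm{\rho(t)}_{H^r}\lesssim\norm{\rho_0}_{H^r}\exp\Bigl(C\int_0^t\norm{\nabla u(\tau)}_{L^\infty}\,d\tau\Bigr),
\end{equation}
whence $\rho\in L^\infty H^r$, upgraded to $\rho\in CH^r$ by the transport structure and a standard weak-continuity argument. The integer cases $r=0,1$ reduce to elementary energy estimates, while the genuinely fractional values $r=s\in(0,1)\cup(1,2)$ are where the commutator estimate carries the weight.

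Finally, with $\rho\in L^\infty H^r$ now available, I would bootstrap the velocity. Testing~\eqref{eq:Leray projected momentum equation} against $A^s u$ gives, after absorbing the top-order terms into the dissipation $\norm{A^{\frac{s+1}{2}}u}_{L^2}^2$, a differential inequality for $\norm{A^{\frac{s}{2}}u}_{L^2}^2$: the buoyancy term is handled by $\norm{A^{\frac{s-1}{2}}\Leray(\rho e_2)}_{L^2}\lesssim\norm{\rho}_{H^{s-1}}\le\norm{\rho}_{H^r}$ (legitimate since $r\ge s-1$ in all four cases), while the advection term is estimated by the fractional Leibniz and commutator rules together with the $L^1 W^{1,\infty}$ bound. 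Gr\"onwall then yields $u\in CD(A^{\frac{s}{2}})\cap L^2 D(A^{\frac{s+1}{2}})$; the sharper endpoint space $L^\infty D(A^{\frac{s+1}{2}})$ and the trace regularities $u_t\in CD(A^{\frac{s-1}{2}})$ and $\rho_t\in CH^{r-1}$ follow from maximal parabolic regularity applied to $u_t + Au = F$ with the forcing $F$ now controlled, together with the continuity equation for $\rho_t$. The passage to the limit in the Galerkin scheme and uniqueness (a difference estimate in a suitably weak norm, closed by a Gr\"onwall-type inequality that exploits the $L^1 W^{1,\infty}$ control of the velocity) then complete the argument.

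The principal obstacle is the density step for non-integer $r$, and most acutely the lower fractional regime $r=s\in(0,1)$: there $\nabla\rho$ need not belong to $L^\infty$, so the classical commutator estimate must be replaced by a fractional version that still saves a derivative, controlling $[(-\Delta)^{r/2}, u\cdot\nabla]\rho$ by $\norm{\nabla u}_{L^\infty}\norm{\rho}_{H^r}$ alone. One must also verify that extending to $\RR^2$ does not corrupt this structure near $\partial\Omega$ and that the commutator bound survives the Galerkin truncation. Once these fractional estimates are in place the coupling closes cleanly, precisely because the $L^1 W^{1,\infty}$ velocity bound is obtained independently of the density regularity.
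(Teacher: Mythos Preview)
Your overall scheme---Galerkin approximation, fractional velocity energy via testing against $A^s u$, extension to $\RR^2$ plus a commutator estimate for the density, Lions--Magenes for time continuity---is exactly the paper's approach. The identification of the fractional density step as the crux, and the use of the $L^1W^{1,\infty}$ bound from Lemma~\ref{lem:unique particle trajectories} as the engine driving the transport estimate, is also correct.

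There is one genuine imprecision that, as written, would become a gap. You hope to bound the commutator $[\Lambda^r, u\cdot\nabla]\rho$ by $\norm{\nabla u}_{L^\infty}\norm{\rho}_{H^r}$ \emph{alone}, and you flag this as the principal obstacle. In fact the standard Kato--Ponce-type bounds do not give this: for $s\in(0,1)$ the estimate from~\cite{L} yields
\[
\norm{\Lambda^s\nabla\cdot(u\rho)-u\cdot\Lambda^s\nabla\rho}_{L^2}\lesssim \norm{\nabla u}_{L^\infty}\norm{\Lambda^s\rho}_{L^2}+\norm{D^{1+s}u}_{L^{2/s}}\norm{\rho}_{L^{2/(1-s)}}\lesssim\bigl(\norm{\nabla u}_{L^\infty}+\norm{u}_{H^2}\bigr)\norm{\rho}_{H^s},
\]
and for $s\in(1,2)$ the estimate from~\cite{FMRR} gives $\norm{[\Lambda^s,u\cdot\nabla]\rho}_{L^2}\lesssim\norm{u}_{H^{s+1}}\norm{\rho}_{H^s}$. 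The extra term is not ``strictly lower order'': it involves \emph{more} derivatives of $u$ than $\nabla u\in L^\infty$. The paper resolves this by reversing your order: it first runs the $A^s u$ energy estimate, which for $s\in(0,1)$ needs only $\rho\in L^2$ and for $s\in(1,2)$ only $\rho\in H^1$ (never $\rho\in H^s$), obtaining $u\in L^\infty H^s\cap L^2 H^{s+1}$; in particular $\norm{u}_{H^2}$ and $\norm{u}_{H^{s+1}}$ are then time-integrable, and the density Gr\"onwall closes. Your ordering (density before the velocity bootstrap) therefore has a circularity unless you observe that Lemma~\ref{lem:unique particle trajectories} already contains the fractional velocity energy estimate~\eqref{eq:H^s energy bound} in its proof, so that $u\in L^2H^{s+1}$ and $u\in L^{1+\delta}W^{2,2+\veps}$ are in hand the moment you invoke it.

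A minor point: for the advection term in the $A^s u$ estimate the paper does not use fractional Leibniz or the $L^1W^{1,\infty}$ bound at all, but rather elementary H\"older and Sobolev embeddings, giving $\norm{A^{\frac{s-1}{2}}\Leray(u\cdot\nabla u)}_{L^2}\lesssim\norm{A^{s/2}u}_{L^2}\norm{\nabla u}_{L^2}$ for $s\in(0,1)$ and $\lesssim\norm{u}_{H^1}(\norm{u}_{H^1}+\norm{Au}_{L^2})$ for $s\in(1,2)$, which closes directly against~\eqref{eq:energy bound} and~\eqref{eq:higher order energy bounds}.
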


    The proof of Theorem~\ref{T02} is achieved in two parts. Beginning from the approximate system for the velocity, its estimates can be derived without needing $r=s$, and we identify a subsequence that converges weakly to the desired solution $u$. Indeed, it is sufficient to have $r=\lfloor s\rfloor$, and the corresponding density bounds do not pose any problems. However, when we have to deal with $\rho \in H^s$, we require the use of a commutator estimate. For this reason, when the density has fractional regularity, we extend the solution $u$ and the initial data $(u_0,\rho_0)$ to the whole plane $\RR^2$, and use the continuity of the Sobolev extension to obtain the necessary bounds.

    Finally, we also investigate the asymptotic behavior of the solutions.
    Before stating our result, we define the projection $Q\colon L^2 \to H'$ as
    $Qf = \nabla \psi$, where $\psi$ is the unique solution of
    \begin{align}
    \begin{split}
       \div Qf =\Delta \psi &= \div f \comma \text{ in } \Omega,
    \\
       \psi &= 0 \comma \text{ on } \partial \Omega.
       \label{eq:projection1}
    \end{split}
    \end{align}
    We remark that $Q$ need not be equal to $I-\mathbb{P}$. The reason is $(I-\mathbb{P})f = \nabla \phi$, where $\phi$ is the unique solution of
    \begin{align}
    \begin{split}
       \Delta \phi &= \div f \comma \text{ in } \Omega,
    \\
       \nabla \phi \cdot n &= f \cdot n \comma \text{ on } \partial \Omega.
       \label{eq:projection2}
    \end{split}
    \end{align}
    In other words, $I-\mathbb{P}$ only extracts the curl-free part of $f$, whereas $Q$ also ensures that it is trace-free. Now, we are in a position to state our result.
    
    \begin{Theorem}[Asymptotic properties] \label{T05}
	Let $(u_0,\rho_0) \in D(A)\times H^2$, and let $(u,\rho)$ be the solution to \eqref{eq:boussinesq equations}. Then, 
    \begin{equation} \label{eq:asy}
        \Vert Au - \mathbb{P}(\rho e_2)\Vert_{V} + \Vert Q(\nabla p - \rho e_2)\Vert_{H^1\cap H'} \to 0 \comma t \to \infty.
    \end{equation}
    Moreover, the following are equivalent:
	\begin{enumerate}[(i)]
        \item $\rho(t)$ converges to a steady state $\bar{\rho}$ in $L^2$ as $t \rightarrow \infty$. 
        
        \item $\lim_{t \to \infty}(I-\mathbb{P})(\rho(t) e_2) = \bar{\rho}e_2$, with $\Vert \bar{\rho}\Vert_{L^2} = \Vert \rho_0\Vert_{L^2}$.
    \end{enumerate}
    When this is the case, we also have
    \begin{equation} \label{eq:asy2}
        \Vert Au \Vert_{L^2} + \Vert \nabla p- \overline{\rho} e_2\Vert_{L^2} \to 0 \comma t \to \infty .
    \end{equation}
    \end{Theorem}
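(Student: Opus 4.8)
The plan is to deduce everything from the $L^2$-level asymptotics of Theorem~\ref{T00} (valid since $D(A)\times H^2\subset D(A)\times H^1$): $\norm{\nabla u(t)}_{L^2}\to 0$, $\norm{Au-\mathbb{P}(\rho e_2)}_{L^2}\to 0$, $\norm{Au}_{L^2}\le C$, and $\mathbb{P}(\rho e_2)\rightharpoonup 0$ in $H$; from the transport structure, the conservation $\norm{\rho(t)}_{L^2}=\norm{\rho_0}_{L^2}$; and from the monotone energy $E(t)=\tfrac12\norm{u}_{L^2}^2-\int_\Omega\rho\,x_2\,dx$, which satisfies $E'=-\norm{\nabla u}_{L^2}^2$ and is bounded below, the finite dissipation $\int_0^\infty\norm{\nabla u}_{L^2}^2\,dt<\infty$. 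Writing $w:=Au-\mathbb{P}(\rho e_2)$, the projected momentum equation gives $w=-u_t-\mathbb{P}(u\cdot\nabla u)$. The Poincaré inequality turns $\norm{\nabla u}_{L^2}\to 0$ into $\norm{u}_{L^2}\to 0$, and then the $2$D inequalities $\norm{u}_{L^\infty}\lesssim\norm{u}_{L^2}^{1/2}\norm{u}_{H^2}^{1/2}$ and $\norm{\nabla u}_{L^4}\lesssim\norm{\nabla u}_{L^2}^{1/2}\norm{u}_{H^2}^{1/2}$, together with $\norm{u}_{H^2}\le C$, give $\norm{u}_{L^\infty}\to 0$ and $\norm{\nabla u}_{L^4}\to 0$. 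The $Q$-term is then immediate: applying $Q$ to $\nabla p-\rho e_2=\Delta u-u_t-u\cdot\nabla u$ and using $\div(\Delta u)=\Delta(\div u)=0$ and $\div u_t=0$ gives $Q(\nabla p-\rho e_2)=-Q(u\cdot\nabla u)=-\nabla\psi$, where $\Delta\psi=\div(u\cdot\nabla u)$ (a quadratic in $\nabla u$) and $\psi|_{\partial\Omega}=0$; elliptic regularity yields $\norm{\nabla\psi}_{H^1}\le C\norm{\div(u\cdot\nabla u)}_{L^2}\lesssim\norm{\nabla u}_{L^4}^2\to 0$, controlling both the $H^1$ and the $H'$ parts of the norm.

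For the $V$-convergence $\norm{w}_V\to 0$ I first dispatch the nonlinear piece: the product estimate $\norm{u\cdot\nabla u}_{H^1}\lesssim\norm{u}_{L^\infty}\norm{\nabla u}_{L^2}+\norm{\nabla u}_{L^4}^2+\norm{u}_{L^\infty}\norm{u}_{H^2}\to 0$ gives $\norm{\mathbb{P}(u\cdot\nabla u)}_V\to 0$, so it remains to show $\norm{u_t}_V\to 0$. Note $\norm{u_t}_{L^2}\to 0$ already, since $u_t=-w-\mathbb{P}(u\cdot\nabla u)$ with both terms vanishing in $L^2$. Setting $v:=u_t$ and differentiating the momentum equation in time, $v_t+Av+\mathbb{P}(v\cdot\nabla u+u\cdot\nabla v)=\mathbb{P}(\rho_t e_2)$ with $\rho_t=-u\cdot\nabla\rho$. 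Testing against $v$, using $\langle u\cdot\nabla v,v\rangle=0$, the Ladyzhenskaya bound $|\langle v\cdot\nabla u,v\rangle|\lesssim\norm{\nabla u}_{L^2}\norm{v}_{L^2}\norm{v}_V$, and—decisively—the integration by parts $\langle\mathbb{P}(\rho_t e_2),v\rangle=\langle\rho,u\cdot\nabla v_2\rangle\le\norm{\rho_0}_{L^2}\norm{u}_{L^\infty}\norm{v}_V$ (which replaces $\norm{\nabla\rho}_{L^2}$, only sub-exponentially bounded, by the conserved $\norm{\rho_0}_{L^2}$), I obtain $\tfrac{d}{dt}\norm{v}_{L^2}^2+\norm{v}_V^2\lesssim\norm{\nabla u}_{L^2}^2\norm{v}_{L^2}^2+\norm{u}_{L^\infty}^2$. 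Integrating over $[t,t+1]$ and invoking $\norm{v}_{L^2}\to 0$, $\int_t^{t+1}\norm{\nabla u}_{L^2}^2\to 0$, and $\int_t^{t+1}\norm{u}_{L^\infty}^2\lesssim(\int_t^{t+1}\norm{\nabla u}_{L^2}^2)^{1/2}\to 0$, I get $\int_t^{t+1}\norm{u_t}_V^2\,ds\to 0$. \textbf{Main obstacle:} upgrading this averaged decay to the pointwise limit $\norm{u_t(t)}_V\to 0$. This calls for a Barbalat-type argument requiring equicontinuity of $t\mapsto\norm{u_t}_V^2$, i.e.\ a bound on its derivative obtained by testing the $v$-equation against $Av$; the non-diffusive forcing $\mathbb{P}(\rho_t e_2)$ must once more be handled through conservation of $\norm{\rho}_{L^2}$ and the decay of $\norm{u}_{L^\infty}$ rather than by direct bounds. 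As with the other estimates, these would be carried out on the Galerkin approximations and passed to the limit.

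For the equivalence, (i)$\Rightarrow$(ii): if $\rho\to\bar\rho$ in $L^2$ then $\mathbb{P}(\rho e_2)\to\mathbb{P}(\bar\rho e_2)$ strongly, and comparison with the weak limit $\mathbb{P}(\rho e_2)\rightharpoonup 0$ forces $\mathbb{P}(\bar\rho e_2)=0$; hence $(I-\mathbb{P})(\rho e_2)\to(I-\mathbb{P})(\bar\rho e_2)=\bar\rho e_2$ and $\norm{\bar\rho}_{L^2}=\lim\norm{\rho}_{L^2}=\norm{\rho_0}_{L^2}$. Conversely (ii)$\Rightarrow$(i): orthogonality of the Helmholtz decomposition gives $\norm{\rho_0}_{L^2}^2=\norm{\rho}_{L^2}^2=\norm{\mathbb{P}(\rho e_2)}_{L^2}^2+\norm{(I-\mathbb{P})(\rho e_2)}_{L^2}^2$, and since the last term tends to $\norm{\bar\rho}_{L^2}^2=\norm{\rho_0}_{L^2}^2$, we must have $\mathbb{P}(\rho e_2)\to 0$ in $L^2$; therefore $\rho e_2=\mathbb{P}(\rho e_2)+(I-\mathbb{P})(\rho e_2)\to\bar\rho e_2$, i.e.\ $\rho\to\bar\rho$ in $L^2$. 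The identity $\mathbb{P}(\bar\rho e_2)=0$ means $\bar\rho e_2$ is a gradient, so $\bar\rho$ depends on $x_2$ alone—the stratified profile of the introduction.

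Finally, under (i)/(ii), I establish~\eqref{eq:asy2}. From $\mathbb{P}(\rho e_2)\to 0$ in $L^2$ and $\norm{Au-\mathbb{P}(\rho e_2)}_{L^2}\to 0$ I conclude $\norm{Au}_{L^2}\to 0$; with $\norm{u}_{L^2}\to 0$ and the Stokes elliptic estimate $\norm{u}_{H^2}\lesssim\norm{Au}_{L^2}+\norm{u}_{L^2}$, also $\norm{u}_{H^2}\to 0$, hence $\norm{\Delta u}_{L^2}\to 0$. Projecting the unprojected momentum equation by $I-\mathbb{P}$ gives $\nabla p=(I-\mathbb{P})(\rho e_2)-(I-\mathbb{P})(u\cdot\nabla u)+(I-\mathbb{P})\Delta u$, so $\nabla p-\bar\rho e_2=[(I-\mathbb{P})(\rho e_2)-\bar\rho e_2]-(I-\mathbb{P})(u\cdot\nabla u)+(I-\mathbb{P})\Delta u$, and the three terms vanish in $L^2$ by (ii), by $\norm{u\cdot\nabla u}_{L^2}\le\norm{u}_{L^\infty}\norm{\nabla u}_{L^2}\to 0$, and by $\norm{\Delta u}_{L^2}\to 0$, respectively. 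This yields $\norm{\nabla p-\bar\rho e_2}_{L^2}\to 0$ and completes the plan.
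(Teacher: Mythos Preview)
Your argument follows the same route as the paper's: differentiate the projected momentum equation in time and test with $u_t$, handle the $\rho_t$ forcing via integration by parts so that only the conserved $\Vert\rho\Vert_{L^2}$ (not $\Vert\nabla\rho\Vert_{L^2}$) appears, and deduce $\int_0^\infty\Vert\nabla u_t\Vert_{L^2}^2\,dt<\infty$; the $Q$-term, the equivalence $(i)\Leftrightarrow(ii)$, and~\eqref{eq:asy2} are treated in the same way. Regarding your flagged ``main obstacle''---passing from the finite dissipation integral to the pointwise limit $\Vert\nabla u_t(t)\Vert_{L^2}\to 0$---the paper simply asserts this conclusion directly from $\int_0^\infty\Vert\nabla u_t\Vert_{L^2}^2\,ds<\infty$ without supplying the Barbalat/equicontinuity step you identify as needed, so at that point you are in fact being more scrupulous than the paper itself.
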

    
    This theorem states that the density achieves a steady state if and only if the curl-free part of $\rho e_2$ converges to a state that has the same $L^2$ norm as $\rho_0$. Thus, the vector $\rho e_2$ must be entirely curl-free in the limit $t\to\infty$, i.e., $\partial_1 \rho$ goes to $0$ asymptotically\footnote{The convergence of the density to the steady state is only in $L^2$ and not in $H^1$. So, when we say that $\nabla\times(\rho e_2)$ vanishes asymptotically, this is understood to be in the distributional sense.}. Physically, this signifies that the steady state $\overline{\rho}$ describes a stratified fluid. We also prove that the $H^2$ norm of the velocity vanishes in the long-time limit. Thus, for initial data in $D(A)\times H^2$, if the system $(u,\rho)$ were to asymptotically converge to any state, then it must be $(0,\overline{\rho})$, where $\overline{\rho}$ only depends on the second coordinate $x_2$. From~\eqref{eq:asy2}, we see that $(\partial_1 p,\partial_2 p) \to (0,\overline{\rho})$ in $L^2$ asymptotically, whence it is evident that the force from the steady state density is balanced by a (purely) vertical pressure gradient.
    
	The paper is organized as follows. In Section~\ref{sec:proof of V x H^1 persistence}, we prove part~(i) of Theorem~\ref{T01}, beginning with the derivation of a priori estimates for an approximation of the Boussinesq system. We also establish the existence of a unique solution to the approximate system given in \eqref{eq:Galerkin approximation}, using a combination of fixed point and uniform boundedness arguments. Passing to the limit gives us a weak solution to the original system, which is subsequently upgraded to strong continuity in time. Next, the required a priori estimates for proving part~(ii) of Theorem~\ref{T01} are addressed in Section~\ref{sec:proof of D(A) x H^2 persistence}. We work with initial data in $(H^k\cap V) \times H^k$ in Section~\ref{sec:proof of H^k x H^k persistence}, and demonstrate the existence of a unique globally persistent solution, i.e., part~(iii) of Theorem~\ref{T01}. Simultaneously, we also specify the higher time-regularity of the solution. In order to achieve this, we use the characterization theorem in \cite{T3} and formulate the associated induction argument in a manner consistent with the coupling between the velocity and the density. The persistence results for fractional regularity (Theorem~\ref{T02}) are established in Section~\ref{sec:proof of fractional regularity}, using a combination of Sobolev extension and commutator estimates. Finally, the asymptotic properties in Theorem~\ref{T05} are proved in Section~\ref{sec:proof of asymptotic properties}.
	
	Before moving on, we state here some results that are used in the course of the proofs of the main results. The first is a maximal parabolic regularity estimate, proven in ~\cite[Theorem 2.8]{GS}.
	\begin{Lemma} \label{lem:maximal regularity for Stokes}
		Consider the time-dependent Stokes problem given by
		\begin{equation}
			\begin{aligned}
				u_t - \Delta u + \nabla p 
				&= 
				f , \\
				\nabla\cdot u 
				&= 
				0 .
			\end{aligned}
		\end{equation}
		For $1<s,q<\infty$, assume that $f\in L^s L^q$, with the initial condition $u(0,x) = a(x) \in D_q^{1-\frac{1}{s},s}$, where $D_q^{1-\frac{1}{s},s}$ is the real interpolation space $(D(A_q),L^q_{\sigma})_{\frac{1}{s},s}$ with $D(A_q)$ denoting the divergence-free subspace of $W^{2,q}$ and $L^q_{\sigma}$ the divergence-free subspace of $L^q$. Then, there exists a unique solution $(u,p)\in (L^s W^{2,q}\cap W^{1,s}L^q \times W^{1,s}L^q)$ satisfying 
		\begin{equation}
			\norm{u_t}_{L^s L^q} + \norm{u}_{L^s W^{2,q}} + \norm{p}_{L^s W^{1,q}} 
			\lesssim 
			\norm{f}_{L^s L^q} + \norm{a}_{D_q^{1-\frac{1}{s},s}}.
		\end{equation}
	\end{Lemma}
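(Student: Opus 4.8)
The plan is to recast the linear Stokes system as an abstract parabolic Cauchy problem on the solenoidal space $L^q_{\sigma}$ and then invoke the theory of maximal $L^s$-regularity for sectorial operators. First I would apply the Leray projection $\mathbb{P}$ to the momentum equation: since $\mathbb{P}u_t = u_t$ on divergence-free fields and $\mathbb{P}\nabla p = 0$, the system collapses to
\[
u_t + A_q u = \mathbb{P} f, \qquad u(0) = a,
\]
where $A_q = -\mathbb{P}\Delta$ is the Stokes operator with domain $D(A_q)$ equal to the divergence-free subspace of $W^{2,q}$. The pressure is then reconstructed \emph{a posteriori} from
\[
\nabla p = (I - \mathbb{P})(f + \Delta u),
\]
so that, once the velocity estimates are secured, each term on the right lies in $L^s L^q$ and the claimed bound on $\norm{p}_{L^s W^{1,q}}$ follows (the pressure being determined up to a constant).

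The analytic heart of the argument is that $A_q$ enjoys maximal $L^s$-regularity on $L^q_{\sigma}$, which I would obtain in two steps. First, $A_q$ is $\mathcal{R}$-sectorial of angle less than $\pi/2$: the resolvent bound $\norm{(\lambda + A_q)^{-1}}_{L^q \to L^q} \lesssim |\lambda|^{-1}$ on a sector, together with the $\mathcal{R}$-boundedness of $\{\lambda(\lambda + A_q)^{-1}\}$, is classical for the Stokes operator on smooth bounded domains (Giga and Solonnikov for sectoriality, with the $\mathcal{R}$-bounded refinements due to Geissert--Heck--Hieber and coauthors). Second, since $L^q_{\sigma}$ is a closed subspace of the UMD space $L^q$ for $1 < q < \infty$, the Weis operator-valued multiplier theorem upgrades $\mathcal{R}$-sectoriality to maximal $L^s$-regularity; equivalently, one verifies that $A_q$ has bounded imaginary powers and applies the Dore--Venni theorem. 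For the case $a = 0$ this yields a unique $u$ with
\[
\norm{u_t}_{L^s L^q} + \norm{A_q u}_{L^s L^q} \lesssim \norm{\mathbb{P} f}_{L^s L^q} \lesssim \norm{f}_{L^s L^q},
\]
and elliptic regularity for the stationary Stokes system promotes $\norm{A_q u}_{L^q}$ to the full $\norm{u}_{W^{2,q}}$.

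To incorporate the nonzero datum I would use that the real interpolation space $D_q^{1-\frac{1}{s},s} = (D(A_q), L^q_{\sigma})_{\frac{1}{s},s}$ is precisely the \emph{trace space} of the maximal regularity class $W^{1,s}(0,T;L^q_{\sigma}) \cap L^s(0,T;D(A_q))$; that is, $u \mapsto u(0)$ is a bounded surjection of this class onto $D_q^{1-\frac{1}{s},s}$ with a bounded right inverse. Splitting $u = e^{-tA_q}a + v$, the analytic semigroup part carries the initial condition with the standard trace bound
\[
\norm{e^{-tA_q}a}_{W^{1,s}(0,T;L^q_{\sigma}) \cap L^s(0,T;D(A_q))} \lesssim \norm{a}_{D_q^{1-\frac{1}{s},s}},
\]
while $v$ solves the zero-data problem already handled, with forcing $\mathbb{P}f$. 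Adding the two estimates delivers the full inequality, and uniqueness follows from injectivity of $\lambda + A_q$ together with a duality/energy argument.

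The main obstacle is the harmonic-analytic input of the second step, namely establishing $\mathcal{R}$-sectoriality (or bounded imaginary powers) of $A_q$ on the bounded domain $\Omega$, since the boundary and the nonlocal character of $\mathbb{P}$ make the resolvent analysis delicate. As this is by now classical for smooth bounded domains, I would cite it rather than reprove it and concentrate the exposition on the reduction via $\mathbb{P}$ and the trace-space bookkeeping that produces the sharp initial-data space $D_q^{1-\frac{1}{s},s}$.
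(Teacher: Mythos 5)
Your proposal is correct, but note that the paper does not prove this lemma at all: it is quoted verbatim as \cite[Theorem 2.8]{GS} (Giga--Sohr), whose proof proceeds exactly along the lines you sketch --- reduction via the Leray projection to the abstract Cauchy problem for $A_q$ on $L^q_\sigma$, maximal regularity from bounded imaginary powers via the Dore--Venni theorem (the alternative you mention alongside $\mathcal{R}$-sectoriality and the Weis multiplier theorem), the trace-space characterization of $(D(A_q),L^q_\sigma)_{\frac{1}{s},s}$ for the initial datum, and a posteriori recovery of the pressure from $\nabla p=(I-\mathbb{P})(f+\Delta u)$. So your outline is essentially a faithful reconstruction of the cited source's argument, with the harmonic-analytic core (BIP/$\mathcal{R}$-sectoriality of the Stokes operator on a smooth bounded domain) correctly identified as the input to be cited rather than reproved.
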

	
	The next lemma is an embedding between domains of fractional Stokes operators defined in different $L^p$ spaces. 
	\begin{Lemma} \label{lem:Stokes embedding}
		For the Stokes operator $A_p$ defined in $L^p$, with $\alpha\le\beta$ and $1<p\le q<\infty$ satisfying 
		\begin{equation} \label{eq:numerology for Stokes Sobolev embedding}
			\frac{2\alpha}{n}-\frac{1}{q} 
			\le 
			\frac{2\beta}{n}-\frac{1}{p} ,
		\end{equation}
		we have
		\begin{equation}
			\norm{A^\alpha_q v}_{L^q} 
			\lesssim 
			\norm{A^\beta_pv}_{L^p}. \llabel{EQ280}
		\end{equation}
	\end{Lemma}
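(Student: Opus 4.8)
The plan is to reduce the claimed bound to a Sobolev-type mapping property for negative fractional powers of the Stokes operator. Since the operators $A_p$ for varying $p$ are consistent on the intersections of their domains, and their fractional powers inherit this consistency, for $v\in D(A_p^\beta)$ I would set $g=A_p^\beta v\in L^p_\sigma$ and write $A_q^\alpha v = A^{\alpha-\beta}g = A^{-(\beta-\alpha)}g$, where $\beta-\alpha\ge 0$. Thus it suffices to show that the negative power $A^{-(\beta-\alpha)}$ maps $L^p_\sigma$ into $L^q$ boundedly, precisely when $\beta-\alpha\ge\frac n2(\frac1p-\frac1q)$, which is the rearrangement of~\eqref{eq:numerology for Stokes Sobolev embedding}.

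For the core estimate I would invoke the $L^p$--$L^q$ smoothing property of the Stokes semigroup $e^{-tA}$ (of Giga--Miyakawa type): for $1<p\le q<\infty$ and $\gamma\ge 0$,
\[
\norm{A^{\gamma}e^{-tA}f}_{L^q}\lesssim t^{-\gamma-\frac n2(\frac1p-\frac1q)}\norm{f}_{L^p},\qquad t>0,
\]
together with the fact that on the bounded domain $\Omega$ the Stokes operator has a spectral gap, so that $\norm{e^{-tA}f}_{L^q}\lesssim e^{-\delta t}\norm{f}_{L^p}$ for $t\ge 1$ and some $\delta>0$. Using the Balakrishnan representation
\[
A^{-(\beta-\alpha)}g=\frac{1}{\Gamma(\beta-\alpha)}\int_0^\infty t^{\beta-\alpha-1}e^{-tA}g\,dt
\]
(valid for $\beta-\alpha>0$; the case $\beta-\alpha=0$ forces $p=q$ and is trivial), I would move the $L^q$ norm inside the integral and split at $t=1$. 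The tail $t\ge1$ is controlled by the exponential decay, while on $0<t\le1$ the smoothing bound yields an integrand of order $t^{(\beta-\alpha)-\frac n2(\frac1p-\frac1q)-1}$, whose integral converges so long as the index inequality is strict.

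The main obstacle is the borderline case of equality in~\eqref{eq:numerology for Stokes Sobolev embedding}, where the small-$t$ integral diverges logarithmically and the crude semigroup bound is insufficient; this is the exact analogue of the endpoint in the Hardy--Littlewood--Sobolev inequality. To close this case I would instead appeal to the identification of $D(A_r^\gamma)$ with the Bessel potential space of solenoidal fields $H^{2\gamma,r}_\sigma$ with equivalent norms (in the relevant range of $\gamma$), after which the desired estimate becomes the sharp Sobolev embedding $H^{2\beta,p}\hookrightarrow H^{2\alpha,q}$, valid for $1<p\le q<\infty$ exactly under $2\beta-\frac np\ge 2\alpha-\frac nq$, including equality. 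Some care is then needed to verify that the fractional exponents arising in the paper's applications lie in the range where this identification holds and where the solenoidal boundary conditions do not obstruct the embedding.
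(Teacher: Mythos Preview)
The paper does not actually prove this lemma; it simply cites \cite[Section~2]{SvW} (Sohr--von~Wahl) for the result. Your proposal, by contrast, supplies a genuine argument: you reduce to showing that $A^{-(\beta-\alpha)}$ maps $L^p_\sigma$ to $L^q_\sigma$, and you obtain this in the subcritical range via the Balakrishnan integral combined with the Giga--Miyakawa $L^p$--$L^q$ smoothing estimate and the spectral gap, while for the endpoint you fall back on the identification $D(A_r^\gamma)\simeq H^{2\gamma,r}_\sigma$ and the sharp Sobolev embedding. This is a correct and standard route, and in fact is essentially how such results are proved in the references the paper cites. The only caveat you already flag is that the norm equivalence $\norm{A_r^\gamma\cdot}_{L^r}\sim\norm{\cdot}_{H^{2\gamma,r}}$ is typically stated for $0\le\gamma\le 1$ (or in a range avoiding certain critical exponents tied to boundary conditions), so for general $\alpha,\beta$ one should iterate or interpolate; for the specific applications in this paper the exponents always fall in the admissible range, so no difficulty arises.
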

	The above result can be found in ~\cite[Section 2]{SvW}. Finally, we present a useful lemma that establishes the existence of unique particle trajectories as soon as $u_0$ is marginally smoother than $L^2$.

    \begin{lemma} \label{lem:unique particle trajectories}
		Let $s>0$ and $p>2$. If $u_0\in H^s$, and $\rho_0\in H^s$ or $L^p$, then there exists unique particle trajectories for the Boussinesq equations. In other words, there exist a unique solution to the ODE
		\begin{equation} \label{eq:Lagrangian trajectories ODE}
			\begin{aligned}
				\frac{d}{dt}\eta(x,t) &= u(\eta(x,t),t) , \\
				\eta(x,0) &= x ,
			\end{aligned}
		\end{equation}
		where $u$ is a solution of~\eqref{eq:boussinesq equations}. 
	\end{lemma}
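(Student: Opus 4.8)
The plan is to reduce the whole statement to the single regularity claim that $u\in L^1W^{1,\infty}$ on $[0,T]$, and then run classical ODE theory. Granting $u\in L^1W^{1,\infty}$, the field $u(\cdot,t)$ is Lipschitz on $\bar\Omega$ (here $W^{1,\infty}(\Omega)=C^{0,1}(\bar\Omega)$ since $\Omega$ is smooth and bounded) with Lipschitz constant $L(t):=\norm{\nabla u(\cdot,t)}_{L^\infty}\in L^1(0,T)$ and with $\norm{u(\cdot,t)}_{L^\infty}\in L^1(0,T)$ as well. Carath\'eodory's theorem then yields absolutely continuous local solutions of \eqref{eq:Lagrangian trajectories ODE}; the $L^1$-in-time sup-bound rules out finite-time escape, and since $u$ vanishes on $\partial\Omega$ every boundary point is a stationary trajectory, so by uniqueness the trajectories emanating from the interior remain in $\Omega$ for all time. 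Uniqueness is Gronwall: two solutions $\eta_1,\eta_2$ with the same datum satisfy $\frac{d}{dt}|\eta_1-\eta_2|\le L(t)|\eta_1-\eta_2|$, and $\int_0^T L\,dt<\infty$ forces $\eta_1\equiv\eta_2$. Everything therefore rests on proving $u\in L^1 W^{1,\infty}$.

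To obtain this, I would feed the momentum equation, written as the Stokes system with forcing $f=\Leray(\rho e_2)-\Leray(u\cdot\nabla u)$, into the maximal regularity estimate of Lemma~\ref{lem:maximal regularity for Stokes} with the exponents $(1+\delta,2+\veps)$ for small $\delta,\veps>0$. In two dimensions $W^{2,2+\veps}\hookrightarrow W^{1,\infty}$ (equivalently, by Lemma~\ref{lem:Stokes embedding}, the fractional Stokes domain $D(A_{2+\veps})$ embeds into the Lipschitz class), and on the finite interval $L^{1+\delta}\hookrightarrow L^1$; hence it suffices to show $u\in L^{1+\delta}W^{2,2+\veps}$, which in turn requires verifying the two hypotheses of the lemma. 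The first is that $u_0$ lies in the trace space $D_{2+\veps}^{1-\frac{1}{1+\delta},\,1+\delta}$, whose differentiability index is $\frac{2\delta}{1+\delta}$ and whose integrability is close to $2$; since $u_0\in H^s$ with $s>0$, the Sobolev embedding into this space holds once $\delta,\veps$ are small relative to $s$ (the numerology degenerating to the harmless $s\ge 0$ as $\delta,\veps\to0$). This is exactly where only marginal regularity of the datum is consumed.

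The second hypothesis is $f\in L^{1+\delta}L^{2+\veps}$. The density contributes $\Leray(\rho e_2)$, which is controlled because the transport equation conserves every $L^p$-norm of $\rho$ (an a priori identity from testing $\rho_t+u\cdot\nabla\rho=0$ against $|\rho|^{p-2}\rho$ and using $\div u=0$); thus $\norm{\rho(t)}_{L^{2+\veps}}=\norm{\rho_0}_{L^{2+\veps}}$, finite because $\rho_0\in L^p$ with $p>2$ (take $\veps\le p-2$) or because $\rho_0\in H^s\hookrightarrow L^{2+\veps}$ for small $\veps$. Hence $\Leray(\rho e_2)\in L^\infty L^{2+\veps}\hookrightarrow L^{1+\delta}L^{2+\veps}$.

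The main obstacle is the nonlinearity $\Leray(u\cdot\nabla u)$, which must be placed in $L^{1+\delta}L^{2+\veps}$ from low-regularity data: the energy-class bounds $u\in L^\infty H\cap L^2 V$ alone cannot reach the integrability exponent $2+\veps>2$. The resolution, in the spirit of~\cite{DM}, is to exploit the instantaneous smoothing of the velocity equation --- a standard consequence of the Stokes semigroup bounds together with $\rho\in L^\infty L^2$ --- which for $t>0$ places $u(t)\in H^{1+c}$ with the quantitative rate $\norm{u(t)}_{H^{1+c}}\lec t^{-((1+c)-s)/2}$ dictated by the initial regularity $s$. Feeding this into the two-dimensional estimate $\norm{u\cdot\nabla u}_{L^{2+\veps}}\lec\norm{u}_{H^{1+c}}^2$ (valid for small $c$) yields a time singularity at the origin that is integrable to the power $1+\delta$ precisely because $s>0$, while away from $t=0$ smoothing gives outright boundedness. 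This is the concrete form of trading space integrability of the velocity for time integrability, and I expect it --- together with the simultaneous fitting of the rough datum into the trace space --- to be the crux. Once $f\in L^{1+\delta}L^{2+\veps}$ is secured, Lemma~\ref{lem:maximal regularity for Stokes} delivers $u\in L^{1+\delta}W^{2,2+\veps}\hookrightarrow L^1W^{1,\infty}$, and the flow construction of the first paragraph completes the proof.
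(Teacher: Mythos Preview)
Your overall architecture coincides with the paper's: reduce to $u\in L^1W^{1,\infty}$, obtain this by maximal $L^{1+\delta}W^{2,2+\veps}$ regularity for the Stokes problem (Lemma~\ref{lem:maximal regularity for Stokes}), and verify separately that $u_0$ sits in the trace space, that the density contributes a bounded forcing via conservation of $L^p$ norms, and that the nonlinearity $u\cdot\nabla u$ lands in $L^{1+\delta}L^{2+\veps}$. The first two verifications match the paper essentially verbatim.

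The divergence is in the nonlinear term, and there your argument has a gap. You assert a pointwise smoothing rate $\norm{u(t)}_{H^{1+c}}\lesssim t^{-((1+c)-s)/2}$ as ``a standard consequence of the Stokes semigroup bounds together with $\rho\in L^\infty L^2$.'' But the forcing in the Duhamel formula is $\Leray(\rho e_2)-\Leray(u\cdot\nabla u)$, and the second piece is precisely what you are trying to control; invoking semigroup smoothing with only the $\rho$ part accounted for is circular. One can make such a rate rigorous via a weighted fixed-point argument, but that is nontrivial work you have not done, and it is not clear one recovers exactly the linear exponent $((1+c)-s)/2$ needed for your integrability count.

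The paper bypasses this entirely. Instead of pointwise-in-time smoothing, it first records the fractional energy estimate obtained by testing the momentum equation with $A^s u$: this yields $u\in L^\infty H^s\cap L^2 H^{1+s}$ directly (see~\eqref{eq:H^s energy bound}), with no semigroup machinery and no circularity. Then $\norm{u\cdot\nabla u}_{L^{1+\delta}L^{2+\veps}}$ is bounded by H\"older in time, $\norm{u}_{L^{\frac{2(1+\delta)}{1-\delta}}L^\infty}\norm{\nabla u}_{L^2 L^{2+\veps}}$, and the first factor is handled by interpolating $H^{1+\alpha s}$ between $H^s$ and $H^{1+s}$ so that the resulting time exponent is exactly~$2$. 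The numerology closes precisely because $s>0$. The final passage from $L^{1+\delta}W^{2,2+\veps}$ to $L^1W^{1,\infty}$ is done in the paper via a Gagliardo--Nirenberg interpolation rather than a direct embedding, but this is a cosmetic difference. In short: replace your semigroup-smoothing step by the one-line energy identity $\frac{d}{dt}\norm{A^{s/2}u}_{L^2}^2+\norm{A^{(s+1)/2}u}_{L^2}^2\le\cdots$ and the proof goes through.
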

	
	\begin{proof}
		It is sufficient to consider the case $0<s<1$. From the standard ODE theory, it is evident that $u\in L^1 W^{1,\infty}$ is a sufficient condition for having well-defined particle trajectories. To this end, we first establish $u \in L^{1+\delta} W^{2,2+\veps}$, and then use a Sobolev embedding. For some $\delta,\veps>0$ to be determined below, we invoke Lemma ~\ref{lem:maximal regularity for Stokes} to obtain
		\begin{equation} \label{eq:maximal regularity for Galerkin approx}
			\Vert u \Vert_{L^{1+\delta}W^{2,2+\veps}}
			\lesssim
			\Vert{u_0}\Vert_{D_{2+\veps}^{\frac{\delta}{1+\delta},1+\delta}} +
			\Vert{u\cdot\nabla u}\Vert_{L^{1+\delta} L^{2+\veps}} + \Vert{\rho}\Vert_{L^{1+\delta} L^{2+\veps}} .
		\end{equation}
		We cite from ~\cite[Remark 2.5]{GS} the useful embedding (for $0<r<\infty$ and $1<q<\infty$)
		\begin{equation} \label{eq:embedding of interpolation norms}
			\Vert u_0\Vert_{D_{q}^{1-\frac{1}{r},r}} 
			\lesssim 
			\Vert A_q^{1-\frac{1}{r}+\delta_1} u_0 \Vert_{L^q} ,
		\end{equation}
		for any $\delta_1 > 0$. Given $s>0$, it is possible to choose $\delta,\veps$ small enough so that~\eqref{eq:numerology for Stokes Sobolev embedding} is satisfied, i.e.,
		\begin{equation} \label{eq:delta-varepsilon relation}
			\frac{\delta}{1+\delta} - \frac{1}{2+\veps}
			<
			\frac{s}{2} - \frac{1}{2} .
		\end{equation}
		With such a choice, we may now apply Lemma~\ref{lem:Stokes embedding} to rewrite the RHS of ~\eqref{eq:embedding of interpolation norms} in terms of $\Vert A_2^{\frac{s}{2}}u_0 \Vert_{L^2}$. Indeed, we obtain by~\cite[Theorem 3.1]{FHR}
		\begin{equation} \label{eq:interpolated data estimate}
			\Vert u_0\Vert_{D_{2+\veps}^{\frac{\delta}{1+\delta},1+\delta}} 
			\lesssim 
			\Vert A_{2+\veps}^{\frac{\delta}{1+\delta} +\delta_1}u_0\Vert_{L^{2+\veps}} 
			\lesssim 
			\Vert A_2^\frac{s}{2}u_0\Vert_{L^2} 
			\sim 
			\Vert u_0\Vert_{H^s} ,
		\end{equation}
		where $\sim$ stands for norm equivalence. We proceed on to the second term on the RHS of \eqref{eq:maximal regularity for Galerkin approx}. For $\alpha := 1-\frac{2\delta}{(1+\delta)s} \in (0,1)$, ensured by an appropriate choice of $\delta$, we have
		\begin{equation} \label{eq:maximal regularity RHS 1}
			\begin{aligned}
				\Vert u\cdot \nabla u\Vert_{L^{1+\delta} L^{2+\veps}} 
				&\lesssim 
				\Vert u\Vert_{L^{\frac{2(1+\delta)}{1-\delta}} L^{\infty}} \Vert \nabla u\Vert_{L^2 L^{2+\veps}} \\
				&\lesssim 
				\Vert u\Vert_{L^{\frac{2(1+\delta)}{1-\delta}} H^{1+\alpha s}} \Vert u\Vert_{L^2 H^{1+s}} \\
				&\lesssim 
				\Vert u\Vert_{L^{\infty} H^s}^{1-\theta} \Vert u\Vert_{L^{\frac{2(1+\delta)}{1-\delta}\theta} H^{1+s}}^{\theta} \Vert u\Vert_{L^2 H^{1+s}} \le C(T,\lVert u_0\rVert_{H^s},\lVert \rho_0\rVert_{L^2}) ,
			\end{aligned}
		\end{equation}
		where $\theta = 1-(1-\alpha)s\in (0,1)$, which implies that $\frac{2(1+\delta)}{1-\delta}\theta = 2$. The first step in~\eqref{eq:maximal regularity RHS 1} employs H\"older's inequality, while the second step follows from Sobolev embedding (for a sufficiently small $\veps$). The penultimate step interpolates the $H^{1+\alpha s}$ norm between $H^s$ and $H^{1+s}$, and the last inequality is a result of the fractional energy estimate in~\eqref{eq:H^s energy bound}. Finally, for the term involving density, we note that $\rho$ is purely transported and its $L^p$ norms are conserved. Using~\eqref{eq:density L^p norm}, we get
		\begin{equation} \label{eq:maximal regularity RHS 2}
			\Vert\rho\Vert_{L^{1+\delta} L^{2+\veps}} 
			\le 
			T^{\frac{1}{1+\delta}}\Vert \rho_0\Vert_{L^{2+\veps}} .
		\end{equation}
		For any $p>2$ and $s>0$, there exists $\veps$ small enough that $L^p,H^s\subset L^{2+\veps}$. Thus, if we begin with data $\rho_0$ that belongs to $L^p$ or $H^s$, then the RHS of~\eqref{eq:maximal regularity RHS 2} may be bounded by the corresponding norm. Therefore, from \eqref{eq:interpolated data estimate}--\eqref{eq:maximal regularity RHS 2}, and denoting $Z:= L^p$ or $H^s$ (for $p>2$ or $s>0$), we conclude
		\begin{equation} \label{eq:u^m in L^1+delta,L^2+veps}
			\Vert u \Vert_{L^{1+\delta}W^{2,2+\veps}} 
			\le
			C\left( T, \lVert u_0\rVert_{H^s}, \lVert \rho_0\rVert_{Z} \right) .
		\end{equation}
		Consequently, if $u_0\in H^s$ for $s>0$ and $\delta,\veps$ are selected to obey~\eqref{eq:delta-varepsilon relation}, then we have $u \in L^{1+\delta} W^{2,2+\veps}$. Lastly, from Gagliardo-Nirenberg (GN) interpolation,~\eqref{eq:u^m in L^1+delta,L^2+veps}, and the standard energy estimate~\eqref{eq:energy bound}, we observe that
		\begin{equation} \label{eq:existence of characteristics}
			\Vert \nabla u\Vert_{L^\infty} 
			\lesssim 
			\Vert \nabla u\Vert_{L^2}^\frac{\veps}{2+2\veps} \Vert D^2 u\Vert_{L^{2+\veps}}^\frac{2+\veps}{2+2\veps} + \Vert \nabla u\Vert_{L^2}
			\in 
			L^1 (0,T) ,
		\end{equation}
		guaranteeing the existence of characteristics of the flow.
	\end{proof}

    \section{$V\times H^1$ persistence --- Proof of Theorem ~\ref{T01}$(i)$} \label{sec:proof of V x H^1 persistence}

    \subsection{Approximate equations and a priori estimates} \label{sec:approx equations, a priori estimates}

    We begin by setting up our approximations of the system in ~\eqref{eq:boussinesq equations}. To this end, let $\{w_j\}_{j=1}^\infty$ be a 
	basis (orthonormal in $H$ and orthogonal in $V$)
	consisting of (smooth) Stokes eigenfunctions
	corresponding to the eigenvalues $\{\lambda_j\}_{j=1}^\infty$, 
	where $0<\lambda_1\leq \lambda_2\leq \cdots$. For $m \in \mathbb{N}$,
	denote by $P_m \colon L^2 \rightarrow H$ the orthogonal projection 
	onto the subspace of $H$ spanned by 
	$\{w_1, \ldots, w_m\}$. For functions $\xi_j \colon [0,T]\mapsto \mathbb{R}, j=1,2,\hdots,m$, we define $u^m(t) := \sum_{j=1}^m \xi^m_j(t) w_j$ to be our finite dimensional velocity field. Similarly, we consider the orthogonal basis of $L^2$ given by the eigenfunctions, $\{\bar{w}_j\}_{j=1}^\infty$, of the Laplace operator, and define $\rho^m(t) := \sum_{j=1}^m \bar{\xi}^m_j(t) \bar{w}_j$. Then, the Galerkin system is given by
	\begin{equation} \label{eq:Galerkin approximation}
		\begin{aligned}
			u_t^m + Au^m + P_m(u^m\cdot\nabla u^m) 
			&= 
			P_m(\rho^{m} e_2) , \\
			\rho_t^m + u^m\cdot\nabla\rho^m 
			&= 
			0 , \\
			\nabla\cdot u^m 
			&= 
			0 , \\
			(u^m,\rho^m)(0) 
			&= 
			(P_m u_0,\rho_0^m) .
		\end{aligned}
	\end{equation}
	Here, $\rho_0^m$ is a sequence of smooth approximations to $\rho_0$, which converge to $\rho_0$ in $H^1$ (and also in all $L^p$ norms). This can be achieved via mollification. Therefore, \eqref{eq:Galerkin approximation} is a system of ODE's with a unique and smooth solution on $[0,T]$, for some $T>0$. We now derive various a priori estimates (with uniform bounds) for the approximate system.
	
	\subsubsection{Density in $L^p$}
	For any $1\le p<\infty$, we multiply ~\eqref{eq:Galerkin approximation}$_{2}$ with $(\rho^m)^{p-1}$, integrate over $\Omega$ and use incompressibility to obtain
	\begin{equation} \label{eq:density L^p norm}
		\Vert\rho^m(t)\Vert_{L^p} 
		= 
		\Vert{\rho_0^m}\Vert_{L^p} \le \Vert{\rho_0}\Vert_{L^p}.
	\end{equation}
	In fact, since the $u^m$ are smooth, the flow has well-defined characteristics, and the above estimate also holds for $p=\infty$. This property is preserved in the limit $m\rightarrow\infty$, since we show using higher-order a priori bounds that $u\in L^1 W^{1,\infty}$.
	
	\subsubsection{Energy estimates}
	Next, we test ~\eqref{eq:Galerkin approximation}$_{1}$ with $u^m$, and use the continuity equation to get
	\begin{equation*}
		\begin{aligned}
			\frac{1}{2}\frac{d}{dt}\Vert{u^m}\Vert_{L^2}^2 + \Vert{\nabla u^m}\Vert_{L^2}^2 
			&= 
			\int_{\Omega} \rho^m u^m\cdot e_2 \\
			&= 
			\int_{\Omega} \rho^m u^m\cdot \nabla x_2 
			= 
			-\int_{\Omega} \nabla\cdot(\rho^m u^m) x_2 = \frac{d}{dt}\int_{\Omega} \rho^m x_2.
		\end{aligned}
	\end{equation*}
	Integrating in time yields
	\begin{equation} \label{eq:integrating energy equation}
		\begin{aligned}
			\frac{1}{2}\Vert{u^m(t)}\Vert_{L^2}^2 + \int_0^t \Vert{\nabla u^m}\Vert_{L^2}^2 
			&= 
			\frac{1}{2}\Vert{u_0^m}\Vert_{L^2}^2 + \int_{\Omega} \rho^m(t) x_2 - \int_{\Omega}\rho_0^m x_2 \\
			&\le 
			\frac{1}{2}\Vert{u_0^m}\Vert_{L^2}^2 + \sup_{\Omega}\abs{x_2} \left(\int_{\Omega}\rho^m(t) + \int_{\Omega}\rho_0^m \right) \\
			&\le 
			\frac{1}{2}\Vert{u_0}\Vert_{L^2}^2 + 2\sup_{\Omega}\abs{x_2} \int_{\Omega}\rho_0.
		\end{aligned}
	\end{equation}
	The last inequality is a consequence of $u_0^m$ being a projection of $u_0$, the domain $\Omega$ being bounded, the conservation of all $L^p$ norms of $\rho^m$, and the convergence of $\rho_0^m$ to $\rho_0$. We conclude that 
	\begin{equation}\label{eq:energy bound}
		\sup_{[0,\infty]}\Vert{u^m}\Vert_{L^2}^2 + \int_0^\infty \Vert{\nabla u^m}\Vert_{L^2}^2 
		\le 
		C,
	\end{equation}
	where $C$ depends on $\norm{u_0}_{L^2}$ and $\Vert{\rho_0}\Vert_{L^1}$.

    \subsubsection{Gradient of the density} \label{sec:gradient of the density}
	Applying the gradient to ~\eqref{eq:Galerkin approximation}$_{2}$, and testing with $\nabla\rho^m$, we have
	\begin{equation} \label{eq:gradient rho^m L^2 first step}
		\frac{1}{2}\frac{d}{dt}\Vert{\nabla\rho^m}\Vert_{L^2}^2 
		= 
		-\int_{\Omega}\partial_j \rho^m \partial_j u^m_i \partial_i \rho^m \le \Vert{\nabla u^m}\Vert_{L^{\infty}}\Vert{\nabla\rho^m}\Vert_{L^2}^2.
	\end{equation}
	Gr\"onwall's inequality and the convergence of $\rho_0^m$ to $\rho_0$ in $H^1$ yield
	\begin{equation} \label{eq:rho in H^1}
		\Vert{\nabla\rho^m}\Vert_{L^{\infty} L^2} 
		\le 
		\Vert{\nabla\rho_0}\Vert_{L^2} \exp \left( C\int_0^T \Vert{\nabla u^m}\Vert_{L^{\infty}}\right).
	\end{equation}
	From the continuity equation, we also obtain
	\begin{equation} \label{eq:rho_t in L^2 L^2}
		\Vert{\rho^m_t}\Vert_{L^2 L^2}
		\le 
		\Vert{u^m}\Vert_{L^2 L^{\infty}} \Vert{\nabla\rho^m}\Vert_{L^{\infty} L^2}
		\le 
		\Vert{\nabla\rho_0}\Vert_{L^2} \Vert{u^m}\Vert_{L^2 L^{\infty}} \exp \left(C\int_0^T \Vert{\nabla u^m}\Vert_{L^{\infty}}\right) 
	\end{equation}
	on $[0,T]$. Furthermore, we have $\Vert{u^m}\Vert_{L^2 L^{\infty}} \lesssim \Vert{u^m}\Vert_{L^2 H^2} \lesssim \Vert{Au^m}\Vert_{L^2 L^2}$, the boundedness of which is shown in ~\eqref{eq:higher order energy bounds} below. Therefore, the right-hand side of ~\eqref{eq:rho in H^1} and ~\eqref{eq:rho_t in L^2 L^2} only depends on $T$ and the initial data, according to~\eqref{eq:existence of characteristics} and \eqref{eq:integrating energy equation}.
	
	\subsubsection{Higher order energy estimates} \label{sec:higher order energy estimates}
	To bound the velocity gradient, we test ~\eqref{eq:Galerkin approximation}$_{1}$ with $Au^m$, and use the fact that $(Au^m,u^m) = \Vert{\nabla u^m}\Vert_{L^2}^2$. Therefore,
	\begin{equation*}
		\begin{aligned}
			\frac{1}{2}\frac{d}{dt}\Vert{\nabla u^m}\Vert_{L^2}^2 + \Vert{Au^m}\Vert_{L^2}^2 
			&= 
			-\left(Au^m,u^m\cdot\nabla u^m\right) + \left(Au^m,\rho^m e_2\right) \\
			&\le 
			\Vert{u^m}\Vert_{L^4}\Vert{\nabla u^m}\Vert_{L^4}\Vert{Au^m}\Vert_{L^2} + \Vert{\rho^m}\Vert_{L^2}\Vert{Au^m}\Vert_{L^2} \\
			&\le 
			C\Vert{u^m}\Vert_{L^2}^2 \Vert{\nabla u^m}\Vert_{L^2}^4 + C\Vert{\rho^m}\Vert_{L^2}^2 + \frac{1}{2}\Vert{Au^m}\Vert_{L^2}^2 \\
			&\le 
			C\left(\Vert{\nabla u^m}\Vert_{L^2}^4 + 1\right) + \frac{1}{4}\Vert{Au^m}\Vert_{L^2}^2.
		\end{aligned}
	\end{equation*}
	The penultimate step is a consequence of the Ladyzhenskaya and Young's inequalities, and the constant $C$ in the last step depends on the initial data, using the bounds in ~\eqref{eq:density L^p norm} and ~\eqref{eq:energy bound}. We also test ~\eqref{eq:Galerkin approximation}$_{1}$ with $u^m_t$, which after similar manipulations leads to 
	\begin{equation*}
		\Vert{u^m_t}\Vert_{L^2}^2 + \frac{1}{2}\frac{d}{dt}\Vert{\nabla u^m}\Vert_{L^2}^2 
		\le 
		C\left(\Vert{\nabla u^m}\Vert_{L^2}^4 + 1\right) + \frac{1}{2}\Vert{u^m_t}\Vert_{L^2}^2 + \frac{1}{4}\Vert{Au^m}\Vert_{L^2}^2.
	\end{equation*}
	Finally, we add the last two inequalities and apply Gr\"onwall's inequality to arrive at
	\begin{equation} \label{eq:higher order energy bounds}
		\sup_{0\le t\le T}\Vert{\nabla u^m}\Vert_{L^2}^2 + \int_0^T \left(\Vert{Au^m}\Vert_{L^2}^2 + \Vert{u^m_t}\Vert_{L^2}^2\right) 
		\le 
		C,
	\end{equation}
	where the constant $C$ depends on $T$, $\Vert{u_0}\Vert_{H^1}$, and $\Vert{\rho_0}\Vert_{L^2}$.

  	\subsection{Passing to the limit} \label{sec:passing to the limit}
    Recalling that \eqref{eq:Galerkin approximation} is an ODE system, \eqref{eq:density L^p norm}, \eqref{eq:energy bound}, \eqref{eq:rho in H^1}, and \eqref{eq:higher order energy bounds} imply that the time of existence is independent of $m$, and is equal to $\infty$. 
	Therefore, using these bounds, we may pass to the limit as $m\rightarrow\infty$. Indeed, for any $T>0$ fixed, there exists a pair $(u,\rho)$ satisfying 
	\begin{equation} \label{eq:weak limits}
		\begin{aligned}
			&u^m \rightharpoonup u \text{ weakly-* in } L^{\infty} V \\
			&u^m \rightharpoonup u \text{ weakly in } L^2 D(A) \cap L^{1+\delta} W^{2,2+\veps} \cap H^1 H , \\
			&u^m \to u \text{ strongly in }   L^2 V , \\
			&\rho^m \rightharpoonup \rho \text{ weakly-* in } L^\infty H^1, \\
			&\rho^m \rightharpoonup \rho \text{ weakly in } H^1 L^2 , \\
			&\rho^m \to \rho \text{ strongly in } L^\infty L^2 ,
		\end{aligned}
	\end{equation}
	on the time interval~$[0,T]$. Here, the strong convergence assertions follow from the Aubin-Lions compactness lemma. Using an analogue of the Lions-Magenes lemma for Leray-projected Hilbert spaces (see ~\cite[Lemma 6.7]{RRS}), we also conclude that $u\in C([0,T_0];V)$. Henceforth, we only consider such a continuous representative of the solution. Using the convergence specified in \eqref{eq:weak limits}, we may pass to the limit in the system concluding that $(u,\rho)$ is a solution for~\eqref{eq:boussinesq equations} in the sense of distributions.
	
	\subsection{Uniqueness of solutions} \label{sec:uniqueness of solutions}
	For $(u,\rho) \in CH \times CH^{-1}$, uniqueness was established in~\cite[Theorem 5.1]{H}. This result applies to the scenarios considered here, since we work in subspaces of $CH \times CH^{-1}$.

	\subsection{Time continuity of density} \label{sec:time continuity of density}
	In this section, we establish the continuity of the density in $C H^1$. Subtracting ~\eqref{eq:Galerkin approximation}$_{2}$ and ~\eqref{eq:boussinesq equations}, and denoting $R^m := \rho^m-\rho$ and $U^m := u^m-u$, we get
	\begin{equation} \label{eq:rho^m-rho}
		\begin{aligned}
			R^m_t 
			&= 
			- u^m\cdot\nabla R^m - U^m\cdot\nabla \rho , \\
			R^m(0) 
			&= 
			\rho_0^m - \rho_0 .
		\end{aligned}
	\end{equation}
	Now, testing \eqref{eq:rho^m-rho} with $R^m$ and using Hölder's inequality, we obtain
	\begin{equation*}
		\frac{1}{2}\frac{d}{dt}\Vert R^m\Vert_{L^2}^2 
		\le
		\Vert U^m\Vert_{L^{\infty}} \Vert\nabla\rho\Vert_{L^2} \Vert R^m\Vert_{L^2} .
	\end{equation*}
	Then, applying Gr\"onwall's inequality gives
	\begin{equation} \label{eq:R^m bound in L^2}
		\Vert R^m\Vert_{L^2} 
		\lesssim 
		\Vert R_0^m \Vert_{L^2} \exp{C\left(\Vert U^m\Vert_{L^1 L^{\infty}} \Vert\nabla\rho\Vert_{L^{\infty} L^2}\right)} .
	\end{equation}
	Thanks to ~\eqref{eq:rho in H^1} and the argument following ~\eqref{eq:rho_t in L^2 L^2}, the exponential term in the above inequality is bounded. Hence, using the convergence of $\rho_0^m \rightarrow \rho_0$ in $L^2$,	we conclude that $\rho^m \rightarrow \rho$ in $C([0,T];L^2)$. This, along with $\rho\in L^{\infty}H^1$, leads to $\rho\in C_w([0,T];H^1)$.
	
	Integrating~\eqref{eq:gradient rho^m L^2 first step}, we get
	\begin{equation} \label{eq:gradient rho^m in L^2 as function of time}
		\Vert \nabla\rho^m(t) \Vert_{L^2}
		\le 
		\Vert \nabla\rho_0^m \Vert_{L^2} \exp \left(C\int_{0}^{t} \Vert \nabla u^m(s)\Vert_{L^{\infty}} \ ds \right) .
	\end{equation}
	Passing to the limit $m\rightarrow\infty$, and using the weak-* convergence of $\rho^m$ to $\rho$ from ~\eqref{eq:weak limits}$_{4}$ and the strong convergence of $\rho_0^m$ to $\rho_0$ in $H^1$,
	\begin{equation} \label{eq:gradient rho in L^2 at any t}
		\begin{aligned}
			\Vert \nabla\rho(t) \Vert_{L^2}
			\le 
			\liminf_{m\rightarrow\infty}\Vert \nabla\rho^m(t) \Vert_{L^2}
			&\le 
			\limsup_{m\rightarrow\infty}\Vert \nabla\rho^m(t) \Vert_{L^2} \\
			&\le
			\Vert \nabla\rho_0 \Vert_{L^2} \limsup_{m\rightarrow\infty} \exp \left(C \int_{0}^{t} \Vert \nabla u^m(s)\Vert_{L^{\infty}} \ ds \right) .
		\end{aligned}
	\end{equation}
	The exponential term is bounded (uniformly in $m$) on the interval $[0,T]$. Thus,    
	\begin{equation} \label{eq:gradient rho in L^2 at t=0}
		\limsup_{t\rightarrow 0^+}\Vert \nabla\rho(t) \Vert_{L^2}
		\le 
		\Vert \nabla\rho_0 \Vert_{L^2} .
	\end{equation}
	Since $\rho\in C_w H^1$, lower semicontinuity provides the bound $\Vert \nabla\rho_0 \Vert_{L^2} \le \liminf_{t\rightarrow 0^+} \Vert\nabla\rho(t) \Vert_{L^2}$. Combining this with ~\eqref{eq:gradient rho in L^2 at t=0} yields norm continuity at $t=0^+$, and thus strong right continuity in time at $t=0$ of the $H^1$ norm of the density. With the help of the time-reversal symmetry, we also obtain left continuity at $t=0$. Next, we exploit the time-translation symmetry of the system. Evolving the system from $\rho_0\in H^1$ for some time $\tau>0$, and choosing the continuous representative in $L^2$, we know that the solution $\rho(\tau)\in H^1$ in accordance with the a priori estimates in Section~\ref{sec:approx equations, a priori estimates}. With $\rho(\tau)$ as a new initial condition, we repeat the above arguments to obtain strong continuity at $t=\tau$. Moreover, due to the uniqueness of the solution (Section~\ref{sec:uniqueness of solutions}), it is clear that the evolutions of $\rho_0$ and $\rho(\tau)$ are identical for $t\ge\tau$.
	Since $\tau>0$ was arbitrary, we conclude that $\rho\in C([0,T];H^1)$.

	\subsection{Improved velocity regularity and asymptotic properties} \label{sec:improved velocity regularity for t>0}
	The solution obtained thus far is not only global-in-time but enjoys the same asymptotic properties stated in Theorem ~\ref{T00}. Indeed, since $u\in L^2 D(A)$, for each $\epsilon \in (0,1)$, we can find a $0<t_\epsilon<\epsilon$ such that $(u(t_\epsilon),\rho(t_\epsilon))$ belongs to $D(A) \times H^1$. Now, we consider the system
	\begin{equation} \label{eq:epsilon-boussinesq system}
		\begin{aligned}
			u^\epsilon_t + A u^\epsilon + \Leray(u^\epsilon \cdot \nabla u^\epsilon) 
			&= 
			\Leray(\rho^{\epsilon} e_2) , \\
			\rho^\epsilon_t + u^\epsilon \cdot \nabla \rho^\epsilon 
			&= 
			0 , \\
			(u^\epsilon,\rho^\epsilon)(0) 
			&= 
			(u(t_\epsilon), \rho(t_\epsilon)) ,
		\end{aligned}
	\end{equation}
	which has a unique solution in the class $V\times H^1$ by the preceding arguments. In addition, since 
	\begin{equation} \label{eq:u,rho in CH, CL^2}
		(u,\rho) \in C([0,T];V)\times C([0,T];H^1) ,	
	\end{equation}
	we have
	\begin{equation} \label{eq:continuity of epsilon-solution and original solution}
		\lim_{t \to t_\epsilon} (u(t),\rho(t)) 
		= 
		(u(t_\epsilon), \rho(t_\epsilon)) 
		= 
		(u^\epsilon,\rho^\epsilon)(0) .	
	\end{equation}
	This shows that $(u, \rho)$ is the unique solution of ~\eqref{eq:epsilon-boussinesq system} for $(x,t) \in \Omega\times [t_\epsilon,T]$. Therefore, we may apply Theorem ~\ref{T00} and deduce that the solution is global-in-time satisfying further asymptotic properties.
	
	\subsection{Time derivatives} \label{sec:time derivatives in V' and H^-1}
	We rewrite~\eqref{eq:boussinesq equations} as
	\begin{equation} \label{eq:equations for u_t and rho_t}
		\begin{aligned}
			u_t 
			&= 
			-A u -\Leray(u \cdot \nabla u) + \Leray(\rho e_2) , \\
			\rho_t
			&= 
			-\nabla\cdot(u\rho) ,
		\end{aligned}
	\end{equation}
	and use the regularity of solutions. Indeed, for the density,
    \begin{equation*}
        \Vert \rho_t (t) \Vert_{H^{-1}} \lesssim \Vert u(t)\rho(t) \Vert_{L^2} \lesssim \Vert u(t) \Vert_{L^4} \Vert \rho(t) \Vert_{L^4} \lesssim \Vert u(t) \Vert_{H^1} \Vert \rho(t) \Vert_{H^1} .
    \end{equation*}
    Since the RHS is strongly continuous in time, so is the LHS. The proof for the velocity is similar.
    \qed

    \section{$D(A)\times H^2$ persistence --- Proof of Theorem ~\ref{T01}$(ii)$} \label{sec:proof of D(A) x H^2 persistence}
	
	In this section, we begin from initial data $(u_0,\rho_0)\in D(A)\times H^2$. Since this is more regular than the data in Theorem ~\ref{T00}, we already know that $(u,\rho)$ belongs to $L^{\infty} D(A) \times C H^1$. Therefore, it suffices to show that $\rho$ belongs to $C H^2$. Indeed, the strong continuity of $u$ follows from \cite[Theorem 3.1]{T3}. We work with the approximate system in ~\eqref{eq:Galerkin approximation} to derive a priori estimates. The details of the construction are identical to that in Section~\ref{sec:proof of V x H^1 persistence}, hence they are not repeated here. We remind the reader that all implicit constants depend on $T$ and the initial data (and of course, on the domain $\Omega$).
	
	We differentiate ~\eqref{eq:Galerkin approximation}$_2$ twice to get
	\begin{equation} \label{eq:second derivative of density equation}
		\partial_j \partial_k \rho^m_t + u^m\cdot\nabla \partial_j \partial_k \rho^m 
		= 
		-(\partial_j u^m)\cdot\nabla \partial_k\rho^m - (\partial_k u^m)\cdot\nabla \partial_j\rho^m - (\\
		\partial_j \partial_k u^m)\cdot\nabla\rho^m ,
	\end{equation}
	for $j,k \in \{1,2\}$. Next, testing ~\eqref{eq:second derivative of density equation} with $\partial_j\partial_k\rho^m$, and summing over $j,k$ yields
	\begin{equation} \label{eq:D^2 rho step 1}
		\begin{aligned}
			\frac{1}{2}\frac{d}{dt}\Vert D^2 \rho^m\Vert_{L^2}^2
			&= 
			-(\partial_k\partial_j u^m\cdot\nabla\rho^m,\partial_k\partial_j\rho^m)
			-(\partial_j u^m\cdot\nabla\partial_k\rho^m,\partial_k\partial_j\rho^m) \\
			&\quad
			-(\partial_k u^m\cdot\nabla\partial_j\rho^m,\partial_k\partial_j\rho^m) .
		\end{aligned}
	\end{equation}
	Note that the term involving three derivatives of $\rho^m$ vanishes due to incompressibility. Thus we may rewrite ~\eqref{eq:D^2 rho step 1} as
	\begin{equation} \label{eq:introduction of I in 2nd order density estimate}
		\frac{1}{2}\frac{d}{dt}\Vert D^2\rho^m\Vert_{L^2}^2 
		= 
		-(\partial_k\partial_j u^m\cdot\nabla\rho^m,\partial_k\partial_j\rho^m) + I ,
	\end{equation}
	from where it is easy to see that 
	$\abs{I} \lesssim	\Vert \nabla u^m\Vert_{L^\infty} \Vert D^2 \rho^m\Vert_{L^2}^2$.
	For the remaining term, we have after using the Ladyzhenskaya inequality,
	\begin{equation} \label{eq:D^2 rho step 2}
		\begin{aligned}
			(\partial_k\partial_j u^m\cdot\nabla\rho^m,\partial_k\partial_j\rho^m) &\lesssim 
			\Vert D^2 u^m\Vert_{L^{2+\veps}} \Vert \nabla \rho^m\Vert_{L^{2+\frac{4}{\veps}}} \Vert D^2\rho^m\Vert_{L^2} \\
			&\lesssim
			\Vert u^m\Vert_{W^{2,2+\veps}} \Vert \nabla \rho^m\Vert_{H^1} \Vert D^2\rho^m\Vert_{L^2} \\
			&\lesssim
			\Vert u^m\Vert_{W^{2,2+\veps}} \Vert D^2 \rho^m\Vert_{L^2}^2 + \Vert u^m\Vert_{W^{2,2+\veps}} \Vert \nabla \rho^m\Vert_{L^{\infty} L^2} \Vert D^2 \rho^m\Vert_{L^2} .
		\end{aligned}
	\end{equation}
	We thus arrive at a linear ODE-type inequality
	\begin{equation} \label{eq:D^2 rho step 3}
		\frac{d}{dt}\Vert D^2\rho^m\Vert_{L^2} 
		\lesssim 
		a_m \Vert D^2\rho^m\Vert_{L^2} + b_m ,
	\end{equation}
	where $a_m = \Vert u^m\Vert_{W^{1,\infty}} + \Vert u^m\Vert_{W^{2,2+\veps}}$, and $b_m=  \Vert \nabla \rho^m\Vert_{L^{\infty} L^2} \Vert u^m\Vert_{W^{2,2+\veps}}$. We point out that $a_m, b_m \in L^1(0,T)$ uniformly in $m$, according to~\eqref{eq:u^m in L^1+delta,L^2+veps} and~\eqref{eq:existence of characteristics}. Hence, by Gronwall's inequality, we obtain
	\begin{equation} \label{eq:rho in L^infty H^2}	
		\Vert D^2\rho^m(t)\Vert_{L^2}
		\le
		\left(C\int_{0}^t b_m(s)\,ds + \Vert D^2\rho_0^m\Vert_{L^2}\right) \exp{\left(C\int_0^t a_m(s)\,ds\right)}
		\lesssim
		1 ,
	\end{equation}
	where the constants depend on $T$ and the initial data. It follows that the weak-* limit $\rho$ belongs to $L^{\infty} H^2$. 
	
	It remains to establish the continuity of $\rho$. The arguments in Section~\ref{sec:time continuity of density} showed that $\rho\in CH^1$. Combining with~\eqref{eq:rho in L^infty H^2}, we conclude that $\rho\in C_w([0,T];H^2)$, which gives us $\Vert D^2\rho_0 \Vert_{L^2}\le\liminf_{t\rightarrow 0^+} \Vert D^2\rho(t) \Vert_{L^2}$. Next, we use similar arguments as in Section~\ref{sec:time continuity of density}. From ~\eqref{eq:rho in L^infty H^2}, we see that
	\begin{equation} \label{eq:D^2 rho in L^2 at any t}
		\begin{aligned}
			\Vert D^2\rho(t) \Vert_{L^2}
			&\le 
			\liminf_{m\rightarrow\infty}\Vert D^2\rho^m(t) \Vert_{L^2}
			\le 
			\limsup_{m\rightarrow\infty}\Vert D^2\rho^m(t) \Vert_{L^2} \\
			&\le
			\left(C\limsup_{m\rightarrow\infty}\int_{0}^t b_m(s)\,ds + \Vert D^2\rho_0\Vert_{L^2}\right) \exp{\left(C\limsup_{m\rightarrow\infty}\int_0^t a_m(s)\,ds\right)} .
		\end{aligned}
	\end{equation}
	Again, both integrals in the last step are bounded uniformly in $m$. Therefore, passing the limit as $t\rightarrow 0^+$, we obtain
	\begin{equation} \label{eq:D^2 rho in L^2 at t=0}
		\limsup_{t\rightarrow 0^+}\Vert D^2\rho(t) \Vert_{L^2}
		\le 
		\Vert D^2\rho_0 \Vert_{L^2} .
	\end{equation}
	That $\rho\in C([0,T];H^2)$ follows from time-reversal and time-translation symmetries, and the uniqueness of the solution, just as in Section~\ref{sec:time continuity of density}. Finally, we also remark that the same arguments as in Section~\ref{sec:time derivatives in V' and H^-1} allow us to conclude that $u_t\in C H$ and $\rho_t \in C L^2$.
	\qed

\section{$H^k\times H^k$ persistence --- Proof of Theorem~\ref{T01}$(iii)$} \label{sec:proof of H^k x H^k persistence}
	
	In this section, we show that if the initial data satisfies
	$(u_0,\rho_0) \in (H^k \cap D(A)) \times H^k$, for a fixed $k \in \{3,4,\ldots\}$,
	and the $k^{th}$-order compatibility conditions
	\begin{equation} \label{eq:compatibility conditions for data}
		\begin{aligned}
			\partial^j_t u(0) &\in V \indent \text{ for } 1 \le \abs{j} \le {\left\lfloor \frac{k}{2}\right\rfloor}-1
			\\
			\partial^\frac{k}{2}_t u(0) &\in H \indent \text{ if $k$ is even }
			\\
			\partial^\frac{k-1}{2}_t u(0) &\in V \indent \text{ if $k$ is odd } ,
		\end{aligned}
	\end{equation} 
	hold, then the solution $(u,\rho)$ for \eqref{eq:boussinesq equations} belongs to $C([0,T];H^k\cap D(A)) \times C([0,T];H^k)$.
	In fact, we prove a stronger result. 
	Before stating the exact theorem, we introduce some notation.
	Following \cite{T3}, for $T>0$ fixed, we define 
	\begin{equation} \label{eq:defining W_k and Tilde W_k}
		W_k 
		=  
		\bigcap_{j=0}^{\lfloor\frac{k}{2}\rfloor} C^j([0,T]; H^{k-2j} \cap H), \quad \tilde{W}_k 
		= 
		\bigcap_{j=0}^{\lfloor\frac{k}{2}\rfloor} C^j([0,T]; H^{k-2j}) .
	\end{equation}
	We use an induction argument to establish the a priori estimates. To prove $u\in W_k$, we cite~\cite{T3}, adapted to our setting.
    
	\begin{Theorem}[A characterization of the compatibility conditions] \label{T04}
		Consider
		\begin{equation}
			\begin{aligned} \label{eq:momentum equation alone}
				u_t + Au + \mathbb{P}(u \cdot \nabla u) 
				&= 
				\mathbb{P}(\rho e_2)
				\\
				u(0)
				&=
				u_0.
			\end{aligned}
		\end{equation}
		For some $k\ge 2$, assume that 
		$u_0 \in H^k \cap D(A)$, $\mathbb{P}(\rho e_2) \in W^{k-2}$ and
		\begin{equation} \label{eq:compatibility conditions for momentum inhomogeneity}
			\begin{aligned}
				&\partial^{l}_t \mathbb{P}(\rho e_2) \in L^2 H, \indent k \text{ is odd and } l = {\left\lfloor \frac{k}{2}\right\rfloor}	
				\\
				&\partial^{l}_t \mathbb{P}(\rho e_2) \in L^2 V', \indent k \text{ is even and } l = \frac{k}{2} .
			\end{aligned}
		\end{equation}
		Then a necessary and sufficient condition for the solution of \eqref{eq:momentum equation alone}
		to belong to $W_k$ is \eqref{eq:compatibility conditions for momentum inhomogeneity}.
	\end{Theorem}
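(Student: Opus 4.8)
The plan is to recognize \eqref{eq:momentum equation alone} as an instance of the abstract semilinear parabolic equation $u'+Au+B(u)=f$ treated in~\cite{T3}, with $A$ the (self-adjoint, positive, compact-resolvent) Stokes operator, $B(u)=\mathbb{P}(u\cdot\nabla u)$ the Navier--Stokes nonlinearity, and $f=\mathbb{P}(\rho e_2)$ the inhomogeneity. The characterization is then exactly the conclusion of the abstract theorem of~\cite{T3}, so the work consists in (a) verifying its structural hypotheses for our $A$ and $B$ on the fractional scale $\{D(A^{m/2})\}$, and (b) matching its compatibility conditions to those listed in \eqref{eq:compatibility conditions for data}. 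Throughout, set $u_j:=\partial_t^j u(0)$; differentiating \eqref{eq:momentum equation alone} in time and evaluating at $t=0$ expresses each $u_j$ as a fixed polynomial in $u_0, Au_0,\ldots$ and in $\partial_t^i f(0)$ with $i<j$, so the compatibility conditions are genuine constraints on the data alone.

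Necessity is the easy direction. If $u\in W_k$, then $\partial_t^j u\in C([0,T];H^{k-2j}\cap H)$ for $0\le j\le\lfloor k/2\rfloor$. For $t>0$ the solution lies in $D(A)\subset V$ by interior parabolic smoothing, and differentiating the boundary condition gives $\partial_t^j u(t)\in V$ for every $t>0$. Since $V$ is closed in $H^1$ and $H^{k-2j}\hookrightarrow H^1$ continuously whenever $k-2j\ge 1$, letting $t\to 0^+$ along the continuous representative yields $u_j\in V$ for $1\le j\le\lfloor k/2\rfloor-1$, and $u_{(k-1)/2}\in V$ when $k$ is odd; for $k$ even the top derivative sees $k-2j=0$, so only $u_{k/2}\in H$. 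These are precisely \eqref{eq:compatibility conditions for data}.

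For sufficiency I would argue by induction on $k$, the base case $k=2$ being already established in Section~\ref{sec:proof of D(A) x H^2 persistence} (there $u\in CD(A)$ and $u_t\in CH$, i.e. $u\in W_2$). For the inductive step, differentiate \eqref{eq:momentum equation alone} once in time and write $v:=u_t$, which solves the Stokes system
\begin{equation*}
v_t+Av=g,\qquad g:=\partial_t f-\mathbb{P}\bigl(v\cdot\nabla u+u\cdot\nabla v\bigr),\qquad v(0)=u_1 .
\end{equation*}
The hypothesis $f\in W_{k-2}$ together with the top-order condition \eqref{eq:compatibility conditions for momentum inhomogeneity} on $\partial_t f$, and (granting the product estimates discussed below) the fact that the nonlinear part of $g$ also lies in $W_{k-2}$, ensure that $g$ and $v(0)=u_1\in V$ satisfy the hypotheses of the characterization at level $k-2$. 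Applying the inductive hypothesis—equivalently, the maximal regularity of Lemma~\ref{lem:maximal regularity for Stokes} at each differentiation level—produces $u_t=v\in W_{k-2}$. Rewriting the equation as $Au=f-u_t-B(u)$ and using the elliptic estimate $\Vert u\Vert_{H^k}\lesssim\Vert Au\Vert_{H^{k-2}}$ then upgrades the spatial regularity to $u\in C([0,T];H^k\cap D(A))$, whence $u\in W_k$.

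The main obstacle is the bookkeeping for the nonlinear forcing. At the $j$-th differentiation level the Leibniz rule gives $\partial_t^j B(u)=\sum_{i=0}^{j}\binom{j}{i}\mathbb{P}\bigl(\partial_t^i u\cdot\nabla\partial_t^{j-i}u\bigr)$, and one must check that \emph{every} such product lies in the space dictated by $W_{k-2}$, using the tower $\partial_t^i u\in C([0,T];H^{k-2i})$ supplied by the lower induction steps. In two dimensions this reduces to the Sobolev multiplication estimates $H^{a}\cdot H^{b}\hookrightarrow H^{c}$ (valid once the indices obey the usual numerology, which holds along the whole tower for $k\ge 3$), together with the boundedness of $\mathbb{P}$ on each $H^m\cap H$. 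Simultaneously, one must confirm that the differentiation-generated initial datum $u_1$ really lands in $V$—this is \eqref{eq:compatibility conditions for data} at the next order—so that the Stokes problem for $v$ is posed with compatible data and continuity up to $t=0$ is preserved. Verifying this numerology uniformly over $0\le i\le j\le\lfloor k/2\rfloor$ is the only delicate point; granting it, the abstract framework of~\cite{T3} closes the induction and yields the stated equivalence.
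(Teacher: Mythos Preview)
The paper does not supply its own proof of this theorem: it is stated as an adaptation of Temam's abstract result and simply cited as ``we cite~\cite{T3}, adapted to our setting.'' Your proposal correctly identifies this and goes further by sketching the verification of the structural hypotheses (self-adjoint $A$ with compact resolvent, Navier--Stokes bilinear form $B$) and the induction argument that underlies~\cite{T3}. In that sense your approach is exactly the paper's, only with more detail filled in. You also correctly resolve the apparent typo in the statement, where the equivalence should be with the data compatibility conditions~\eqref{eq:compatibility conditions for data} rather than with~\eqref{eq:compatibility conditions for momentum inhomogeneity} (which is already listed among the hypotheses).

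Two small points of imprecision are worth flagging. First, your base case $k=2$ appeals to Section~\ref{sec:proof of D(A) x H^2 persistence}, but that section treats the coupled Boussinesq system with $\rho_0\in H^2$, not the momentum equation with a prescribed forcing $f\in W_0$; the correct base case is the standard $D(A)$ Navier--Stokes regularity, which the paper itself attributes to \cite[Theorem~3.1]{T3}. Second, in the inductive step you write ``applying the inductive hypothesis---equivalently, the maximal regularity of Lemma~\ref{lem:maximal regularity for Stokes},'' but these are not interchangeable: the inductive hypothesis concerns the nonlinear equation, whereas $v=u_t$ satisfies a \emph{linear} Stokes problem with forcing $g$. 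What you actually need there is the linear characterization (again from~\cite{T3}) applied to $v$, together with the product estimates you describe to place $g$ in $W_{k-3}$ with the appropriate top-order time regularity. With those clarifications your outline is sound and matches the intended argument.
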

	
	In what follows, we need the fact that
	$\Leray \colon H^{-1}(\Omega)\to V'$ is continuous. This is not difficult to see; indeed, for $f\in C^{\infty}(\Omega)$ and $v\in V$,
	\begin{equation} \label{eq:continuity of Leray}
		\abs{(\Leray f,v)}
		=
		\abs{( f,\Leray v)}
		=
		\abs{( f,v)}
		\lesssim 
		\Vert f\Vert_{H^{-1}} \Vert v\Vert_{H_{0}^{1}}
		= 
		\Vert f\Vert_{H^{-1}}\Vert v\Vert_{V} .
	\end{equation}
	This shows that for any $f\in C^{\infty}(\Omega)$, we have
	\begin{equation} \label{eq:Leray f bounded in V'}
		\Vert \Leray f\Vert_{V'}
		\lesssim
		\Vert f\Vert_{H^{-1}} ,
	\end{equation}
	thus establishing the continuity of $\Leray \colon H^{-1}(\Omega)\to V'$.
	
	\begin{proof}[Proof of Theorem~\ref{T01}$(iii)$]
		When $k=3$, we use \cite[Theorem~1.1]{LPZ} which establishes that $(u,\rho) \in C([0,T];H^3)\times C([0,T];H^3)$ under the same assumptions. Then, we only need to check that $(u_t,\rho_t)$ belongs to $C([0,T];H) \times C([0,T];L^2)$. This is evident from the regularity of $u$ and $\rho$, in conjunction with ~\eqref{eq:equations for u_t and rho_t}. In effect, we have $(u,\rho) \in W_3 \times \tilde{W}_3$.
		
		Now, we proceed to the inductive step, and assume that Theorem~\ref{T01}$(iii)$ holds for some $k \in \mathbb{N}$ where $k\ge 3$. Moreover, let $(u_0,\rho_0) \in (H^{k+1}\times D(A)) \times H^{k+1}$ satisfy the compatibility conditions $\eqref{eq:compatibility conditions for data}$ for $k+1$. The rest of the proof is organized as follows. We begin by proving that $u \in W_{k+1}$. Then, we show that $\rho \in L^\infty H^{k+1}$, after which we verify the continuity of $\rho$, i.e., if $\rho \in C([0,T];H^{k+1})$. Finally, we establish that $\rho \in \tilde{W}_{k+1}$. \\
		
		\texttt{Step~1.} We prove that $u \in W_{k+1}$.\\
		
		To achieve this, we apply Theorem~\ref{T04}, which requires that
		$\Leray (\rho e_2)$ belongs to $W_{k-1}$. This follows from the continuity of the Leray projector $\Leray$ on $L^2$-based
		Sobolev spaces (as demonstrated earlier), and the observation that the spaces $W_k$ and $\tilde{W}_k$ are nested. In particular, $\rho \in \tilde{W}_k$ implies 
		$\mathbb{P}(\rho e_2) \in W_k \subseteq W_{k-1}$.
		To invoke Theorem~\ref{T04}, we need the relations
		\begin{equation} \label{eq:compatibility for RHS of momentum, inductive step}
			\begin{aligned}
				&
				\partial^{l}_t \mathbb{P}(\rho e_2) \in L^2 H, \indent k \text{ is even and } l = {\left\lfloor \frac{k+1}{2}\right\rfloor}	
				\\
				&\partial^{l}_t \mathbb{P}(\rho e_2) \in L^2 V', \indent k \text{ is odd and } l = \frac{k+1}{2} .
			\end{aligned}
		\end{equation}
		Since $\mathbb{P}$ is continuous, it is sufficient to show \eqref{eq:compatibility for RHS of momentum, inductive step} when
		$H$ and $V'$ are replaced by $L^2$ and $H^{-1}$, respectively. To establish this, we first assume that $k$ is even, so that $l = {\left\lfloor \frac{k+1}{2}\right\rfloor} = \frac{k}{2}$. Then, since $\rho \in \tilde{W}_k$, it follows that $\partial^{l}_t\rho \in C([0,T];L^2)$. On the other hand, if $k$ is odd, then with $l=\frac{k+1}{2}$, we need to show 
		\begin{equation} \label{eq:rho^l_t for odd k}
			-\partial^{l}_t \rho
			=
			\partial^{l-1}_t (u \cdot \nabla \rho)
			=
			\sum_{i=0}^{l-1} {l-1 \choose i} \partial^{l-1-i}_t u \cdot \partial^{i}_t\nabla \rho
			\in C([0,T];H^{-1}) .
		\end{equation}
		Since $(u,\rho) \in W_{k}\times \tilde{W}_k$, we have for each $0\le i\le l-1$,
		\begin{equation} \label{eq:highest order time derivatives of u,rho for odd k}
			(\partial^{l-1-i}_t u,
			\partial^{i}_t \rho)  
			\in C([0,T];H^1\cap H) \times C([0,T];H^1) .
		\end{equation}
		Using incompressibility, \eqref{eq:rho^l_t for odd k} is established. As a result, \eqref{eq:compatibility for RHS of momentum, inductive step} holds so that by Theorem~\ref{T04}, $u \in W_{k+1}$.\\
		
		\noindent\texttt{Step~2.} We show that $\rho \in C([0,T];H^{k+1})$.\\
		
		We return to \eqref{eq:Galerkin approximation}$_2$ and consider an approximating sequence
		\begin{equation} \label{eq:approximating sequence (u^m,rho^m)}
			(u^m, \rho_0^m) 
			\to
			(u,\rho_0)
			\in
			L^2H^{k+1} \times H^{k+1}
			\comma 
			m \to \infty .
		\end{equation}
		Picking a multi-index $\alpha\in\mathbb{N}^2_0$ with $\abs{\alpha}=k+1$
		and differentiating \eqref{eq:Galerkin approximation}$_2$ gives
		\begin{equation} \label{eq:multi-index derivative of approximate density}
			\begin{aligned}
				\partial^\alpha \rho^m_t
				&=
				-\sum_{\beta < \alpha, \abs{\beta}=1 } {\alpha \choose \beta}
				\partial^\beta u^m \cdot \nabla \partial^{\alpha-\beta}\rho^m
				-\sum_{\beta < \alpha, |\beta|>1} {\alpha \choose \beta}
				\partial^\beta u^m \cdot \nabla \partial^{\alpha-\beta}\rho^m
				\\
				&\quad -u^m\cdot \nabla \partial^\alpha \rho^m
				-\partial^\alpha u^m \cdot \nabla \rho^m
				= I_1 + I_2 + I_3 + I_4.
			\end{aligned}
		\end{equation}
		We test \eqref{eq:multi-index derivative of approximate density} by $\partial^\alpha \rho^m$, and write
		\begin{equation} \label{eq:D^alpha rho^m in L^2, setup}
			\frac{1}{2}\frac{d}{dt}\Vert \partial^\alpha\rho^m\Vert_{L^2}^2
			=
			-\sum_{i=1}^{4}
			(I_i, \partial^\alpha\rho^m) .
		\end{equation}
		Observe that for the first term we have
		\begin{equation} \label{eq:I1 term}
			(I_1, \partial^\alpha\rho^m)
			\lesssim
			\Vert u^m\Vert_{W^{1,\infty}}
			\Vert \rho^m\Vert_{H^{k+1}}^2
			,
		\end{equation}
		while estimating the second term gives
		\begin{equation} \label{eq:I2 term}
			(I_2, \partial^\alpha\rho^m)
			\lesssim
			\Vert u^m\Vert_{W^{k,4}}
			\Vert \rho^m\Vert_{W^{k,4}}
			\Vert \rho^m\Vert_{H^{k+1}}
			\lesssim
			\Vert u^m\Vert_{H^{k+1}}
			\Vert \rho^m\Vert_{H^{k+1}}^2 .
		\end{equation}
		The third term (corresponding to $I_3$) vanishes due to incompressibility upon integration by parts. Finally, for the remaining term we write
		\begin{equation} \label{eq:I4 term}
			(I_4, \rho^m)
			\lesssim
			\Vert u^m\Vert_{H^{k+1}}
			\Vert \rho^m\Vert_{W^{1,\infty}}
			\Vert \rho^m\Vert_{H^{k+1}} .
		\end{equation}
		Now, combining the estimates \eqref{eq:I1 term}--\eqref{eq:I4 term},
		summing~\eqref{eq:D^alpha rho^m in L^2, setup} over $\abs{\alpha} \le m+1$,
		and using the embedding 
		$H^{k+1} \subset W^{1,\infty}$,
		we arrive at
		\begin{equation} \label{eq:D^alpha rho^m in L^2, final}
			\frac{d}{dt}\Vert \rho^m\Vert_{H^{k+1}}^2
			\lesssim
			\Vert u^m\Vert_{H^{k+1}}
			\Vert \rho^m\Vert_{H^{k+1}}^2 .
		\end{equation}
		An application of the Gronwall's inequality yields
		\begin{equation} \label{eq:rho^m in L^{infty} H^{k+1}}
			\Vert \rho^m\Vert_{L^\infty H^{k+1}}^2
			\lesssim
			\exp\left(
			C\int_0^T \Vert u^m\Vert_{H^{k+1}}\,ds
			\right)
			\Vert \rho_0^m\Vert_{H^{k+1}}^2 .           
		\end{equation}
		Passing to the limit by using \eqref{eq:approximating sequence (u^m,rho^m)}
		establishes $\rho \in L^\infty H^{k+1}$. This, combined with $\rho\in \Tilde{W}_k \subset C([0,T];H^k)$, implies $\rho\in C_w([0,T];H^{k+1})$. This also gives us lower semicontinuity of the $H^{k+1}$ norm as $t\rightarrow 0^+$. Due to the uniform (in $m$) bounds of the RHS of~\eqref{eq:rho^m in L^{infty} H^{k+1}}, we follow the same arguments as in~\eqref{eq:gradient rho in L^2 at any t} to obtain upper semicontinuity, whence we conclude the strong continuity in time of the $H^{k+1}$ norm as $t\rightarrow 0^+$. Once again, time reversal and time translation (and uniqueness of the solution) mean that the continuity indeed holds for all $t\in [0,T]$. \\
			
		\noindent\texttt{Step~3.} We show that $\rho \in \tilde{W}_{k+1}$.\\
		
		Recalling the definition of $\tilde{W}_{k+1}$, we aim to show
		\begin{equation} \label{eq:partial_t^j+1 rho in C H^k+1-2j}
			\partial_t^j \rho \in C([0,T];H^{k+1-2j}) , 
		\end{equation}
		for $j=1,\ldots, {\lfloor \frac{k+1}{2}\rfloor}$. We establish this by induction on $j$. First, when $j=1$, we need $\partial_t \rho \in C([0,T];H^{k-1})$, which follows from $u \cdot \nabla \rho$ belongs to
		$C([0,T]; H^{k-1})$. The latter is a consequence of $\nabla \rho \in C([0,T];H^{k})$, $u \in C([0,T];H^{k+1}\cap H)$ and the fact that $H^{k-1}$ is an algebra. Now, we suppose that \eqref{eq:partial_t^j+1 rho in C H^k+1-2j} holds for some $1\le j < {\lfloor \frac{k+1}{2} \rfloor}$, and prove it for $j+1$. To begin with, we apply $\partial^j_t$ to \eqref{eq:boussinesq equations}$_2$ obtaining
		\begin{equation} \label{eq:j time derivatives on continuity}
			\partial^{j+1}_t \rho = - \partial^j_t (u \cdot \nabla \rho) .
		\end{equation}
		
    	Therefore, our goal is to establish $\partial^j_t (u \cdot \nabla \rho)\in C([0,T];H^{k-1-2j})$. Rewriting this term, we get
    	\begin{equation} \label{eq:D^j (u cdot nabla rho)}
    		\partial^j_t (u \cdot \nabla \rho)(t)
    		=
    		\sum_{i=0}^{j} {j \choose i} \partial^{j-i}_t u(t) \cdot \partial^{i}_t \nabla \rho(t) .
    	\end{equation}
    	For the sake of the subsequent calculations, we recall that due to the choices for $i$, $j$ and $k$, we have $k-1-2j\ge 0$ and $k-1-2i\ge 0$. We also note from $u\in W_{k+1}$ and~\eqref{eq:defining W_k and Tilde W_k} that $\partial_t^{j-i}u\in C H^{k+1-2j+2i}$ and $\partial_t^i \nabla\rho \in C H^{k-1-2i}$. Therefore, employing the Kato-Ponce inequality (justified by extending $u$ and $\rho$ to $\mathbb{R}^2$),
    	\begin{equation} \label{eq:kato-ponce applied to partial derivatives of u and rho}
    		\begin{aligned}
    			\Vert \partial^{j-i}_t u(t) \cdot \partial^{i}_t \nabla \rho(t) \Vert_{H^{k-1-2j}}
    			&\lesssim
    			\Vert \partial^{j-i}_t u(t) \Vert_{W^{k-1-2j,\infty}} \Vert \partial^{i}_t \nabla \rho(t) \Vert_{L^2} + \Vert \partial^{j-i}_t u(t) \Vert_{L^{\infty}} \Vert \partial^{i}_t \nabla \rho(t) \Vert_{H^{k-1-2j}} \\
    			&\lesssim 
    			\Vert \partial^{j-i}_t u(t) \Vert_{H^{k+1-2j+2i}} \Vert \partial^{i}_t \nabla \rho(t) \Vert_{H^{k-1-2i}} ,
    		\end{aligned}
    	\end{equation}
    	whence we see that $\partial^{j-i}_t u \cdot \partial^{i}_t \nabla \rho \in L^{\infty} H^{k-1-2j}$. To establish continuity at time $t\in [0,T]$, we consider a sequence of times $t_n$ converging to $t$, and use the continuity of $u$ and $\rho$ as mentioned above.
    	
    	Going back to \eqref{eq:j time derivatives on continuity} and \eqref{eq:D^j (u cdot nabla rho)}, it follows that~\eqref{eq:partial_t^j+1 rho in C H^k+1-2j} holds for $j+1$, and this completes our induction for all $1\le j \le {\lfloor \frac{k+1}{2} \rfloor}$. As a consequence, we obtain $\rho \in \tilde{W}_{k+1}$, which finalizes our induction on $k$.
    \end{proof}
 
\section{Fractional regularity --- Proof of Theorem~\ref{T02}} \label{sec:proof of fractional regularity}

In this section, we only present a~priori estimates for the approximate system \eqref{eq:Galerkin approximation}, and note that construction of global-in-time unique solutions can be achieved using the methods in Section~\ref{sec:proof of V x H^1 persistence}.

	\subsection{Lower fractional regularity} \label{sec:lower fractional regularity}
	Starting with the velocity estimates, we consider $u_0$ that belongs to $D(A^{\frac{s}{2}})$ for $s\in (0,1)$. Testing~\eqref{eq:Galerkin approximation}$_1$ with $A^s u^m$ yields
	\begin{equation} \label{eq:testing momentum with A^s u^m}
		\begin{aligned}
			\frac{1}{2}\frac{d}{dt}\lVert A^\frac{s}{2}u^m \rVert_{L^2}^2 + \lVert A^\frac{s+1}{2}u^m \rVert_{L^2}^2 
			&= 
			-\int_{\Omega}A^s u^m\cdot P_m(u^m\cdot\nabla u^m) + \int_{\Omega} A^s u^m\cdot P_m(\rho^m e_2) \\
			&\le
			\frac{1}{2}\lVert A^\frac{s+1}{2}u^m \rVert_{L^2}^2 + C\lVert A^{\frac{s-1}{2}} P_m(u^m\cdot\nabla u^m)\rVert_{L^2}^2 \\ 
			&\quad + 
			C\lVert A^{\frac{s-1}{2}} P_m(\rho^m e_2)\rVert_{L^2}^2 .
		\end{aligned}
	\end{equation}
	With the help of Lemma~\ref{lem:Stokes embedding} and H\"older's inequality, we have
	\begin{equation*}
		\lVert A^{\frac{s-1}{2}} P_m(u^m\cdot\nabla u^m)\rVert_{L^2} 
		\lesssim 
		\lVert u^m\cdot\nabla u^m \rVert_{L^{\frac{2}{2-s}}} 
		\lesssim
		\lVert u^m \rVert_{L^{\frac{2}{1-s}}} \lVert \nabla u^m \rVert_{L^2} 
		\lesssim 
		\lVert A^{\frac{s}{2}}u^m \rVert_{L^2} \lVert \nabla u^m \rVert_{L^2} .
	\end{equation*}
	Since $\frac{s-1}{2}>-\frac{1}{2}$, the results in~\cite{GuS} imply that $\lVert A^{\frac{s-1}{2}} f\rVert_{L^2} \lec \lVert f \rVert_{H^{s-1}}$. Thus, we can write
	\begin{equation*}
		\lVert A^{\frac{s-1}{2}} P_m(\rho^me_2)\rVert_{L^2} 
		\lesssim 
		\lVert \rho^m \rVert_{H^{s-1}} 
		\lesssim
		\lVert \rho^m \rVert_{L^2}
		\le
		\lVert \rho_0^m \rVert_{L^{2}}
		\le 
		\lVert \rho_0 \rVert_{L^{2}} .
	\end{equation*}
	Finally,~\eqref{eq:testing momentum with A^s u^m} becomes
	\begin{equation}
		\frac{d}{dt}\lVert A^\frac{s}{2}u^m \rVert_{L^2}^2 + \lVert A^\frac{s+1}{2}u^m \rVert_{L^2}^2 
		\lesssim 
		\lVert \rho_0 \rVert_{L^2}^2 + \lVert \nabla u^m \rVert_{L^2}^2 \lVert A^{\frac{s}{2}}u^m \rVert_{L^2}^2 .
	\end{equation}
	Using Gr\"onwall's inequality, along with~\eqref{eq:energy bound}, we arrive at
	\begin{equation} \label{eq:H^s energy bound}
		\Vert{u^m}\Vert_{L^{\infty}H^s} + \lVert{u^m}\rVert_{L^2 H^{s+1}} 
		\le 
		C(T,\lVert u_0\rVert_{H^s},\lVert \rho_0\rVert_{L^2}) .
	\end{equation} 
    Therefore, we may now extract a weakly convergent subsequence, which yields a solution $u\in L^{\infty} D(A^{\frac{s}{2}}) \cap L^2 D(A^{\frac{s+1}{2}})$. Using~\eqref{eq:Galerkin approximation}$_1$, we also see that $\partial_t u \in L^2 D(A^{\frac{s-1}{2}})$, whence an application of the Lions-Magenes lemma (see~\cite[Chapter 3]{T1}) allows us to conclude the strong continuity in time. If $\rho_0\in H^{\lfloor s\rfloor} =~L^2$, we may use~\eqref{eq:density L^p norm} to extract a weak-* convergent subsequence, yielding $\rho\in L^{\infty} L^2$. After establishing $\rho_t\in L^2 H^{-1}$, it is straightforward to conclude the time continuity of $\rho$. Uniqueness follows from Section~\ref{sec:uniqueness of solutions}. This completes the arguments when $(u_0,\rho_0)\in (H^s\cap H)\times H^{\lfloor s\rfloor}$.
 
    Next, assuming that $\rho_0 \in H^s$, we turn our attention to the fractional estimates for the density. The continuity equation lacks dissipation, and we require the Kato-Ponce commutator estimate to deal with the fractional derivative. We begin by considering a divergence-free Sobolev extension of the velocity (the solution obtained above) and the initial data to $\RR^2$, referring to them as $\overline{u}^m$, and $\overline{\rho_0}^m$, respectively. That the velocity extension is divergence-free is guaranteed by~\cite{KMPT}. Therefore, the continuity equation is written down for the extended density field as
	\begin{equation} \label{eq:extended approximate continuity equation}
		\begin{aligned}
			\partial_t \overline{\rho}^m + \overline{u}^m\cdot\nabla \overline{\rho}^m &= 0 , \\
			\overline{\rho}^m(0) &= \overline{\rho_0}^m .
		\end{aligned}
	\end{equation}
	We act upon~\eqref{eq:extended approximate continuity equation}$_1$ by $\Lambda^s$ (where $\Lambda = \sqrt{-\Delta}$), and then test with $\Lambda^s \overline{\rho}^m$. Due to incompressibility, we may add a vanishing term to obtain
	\begin{equation}
		\frac{1}{2}\frac{d}{dt}\Vert \Lambda^s \overline{\rho}^m \Vert_{L^2}^2 = -\int_{\RR^2} (\Lambda^s\nabla\cdot(\overline{u}^m \overline{\rho}^m) - \overline{u}^m\cdot\Lambda^s\nabla\overline{\rho}^m) \ \Lambda^s\overline{\rho}^m ,
	\end{equation}
	which can be simplified to read
	\begin{equation} \label{eq:lower fractional density energy estimate 1}
		\begin{aligned}
			\frac{d}{dt}\Vert \Lambda^s \overline{\rho}^m \Vert_{L^2} 
			&\le 
			\Vert \Lambda^s\nabla\cdot(\overline{u}^m\overline{\rho}^m) - \overline{u}^m\cdot\Lambda^s\nabla\overline{\rho}^m \Vert_{L^2} \\
			&\lesssim 
			\Vert D\overline{u}^m \Vert_{L^{\infty}} \Vert D^s \overline{\rho}^m \Vert_{L^2} + \Vert D^{1+s} \overline{u}^m \Vert_{L^{\frac{2}{s}}} \Vert \overline{\rho}^m \Vert_{L^{\frac{2}{1-s}}} \\
			&\lesssim 
			\left( \Vert D\overline{u}^m \Vert_{L^{\infty}} + \Vert \overline{u}^m \Vert_{H^2} \right) \Vert \overline{\rho}^m \Vert_{H^s} ,
		\end{aligned}
	\end{equation}
    where the norms are understood to be over $\mathbb{R}^2$.
	The commutator estimate leading to the second inequality is a result of~\cite[Theorem 1.2]{L}, while the final step follows from the embeddings $H^s\subset L^{\frac{2}{1-s}}$ and $H^2\subset W^{1+s,\frac{2}{s}}$, for any $0<s<1$. Using~\eqref{eq:u^m in L^1+delta,L^2+veps} and~\eqref{eq:existence of characteristics}, the convergence of the approximate initial data, and the continuity of the extension operator, we have
	\begin{equation}
		\Vert \overline{u}^m \Vert_{L^1 W^{1,\infty}} , \Vert \overline{u}^m \Vert_{L^1 H^2} \le C(T,\Vert u_0^m \Vert_{H^s}, \Vert \overline{\rho_0}^m \Vert_{H^s}) \le C(T,\Vert u_0 \Vert_{H^s}, \Vert \rho_0 \Vert_{H^s}) .
	\end{equation}
	As a result, a straightforward application of G\"onwall's inequality gives us
	\begin{equation}
		\Vert \overline{\rho}^m(t) \Vert_{H^s} \le e^{C(T,\Vert u_0 \Vert_{H^s}, \Vert \overline{\rho_0} \Vert_{H^s})} \Vert \rho_0 \Vert_{H^s} .
	\end{equation}
	After establishing the strong continuity in time just as in Section~\ref{sec:time continuity of density}, we then proceed to extract a weak-* subsequence, and restrict the resulting solution $\overline{\rho}$ to $\Omega$, which yields the sought-after solution $\rho\in CH^s$. 
		
	\subsection{Higher fractional regularity} \label{sec:higher fractional regularity}
	We now deal with $(u_0,\rho_0) \in D(A^{\frac{s}{2}}) \times H^s$ when $1<s<2$. Similarly to Section~\ref{sec:lower fractional regularity}, the velocity bounds can be achieved using only $\rho_0\in H^{\lfloor s \rfloor} = H^1$. Owing to the regularity of the initial density, we have from Section~\ref{sec:gradient of the density} that $\rho^m \in L^{\infty} H^1$. We test~\eqref{eq:Galerkin approximation}$_1$ with $A^s u^m$ and apply Young's inequality to arrive at
	\begin{equation} \label{eq:testing momentum with A^s u^m higher fractional}
			\frac{d}{dt}\lVert A^\frac{s}{2}u^m \rVert_{L^2}^2 + \lVert A^\frac{s+1}{2}u^m \rVert_{L^2}^2
			\le
			C\lVert A^{\frac{s-1}{2}} P_m(u^m\cdot\nabla u^m)\rVert_{L^2}^2 + 
			C\lVert A^{\frac{s-1}{2}} P_m(\rho^m e_2)\rVert_{L^2}^2 .
	\end{equation}
	Once again, we have $\frac{s-1}{2}>-\frac{1}{2}$, so that the norm equivalence in~\cite{GuS} applies. Using that, Lebesgue interpolation, Sobolev embedding, and Lemma~\ref{lem:Stokes embedding}, we can simplify the first term on the RHS of~\eqref{eq:testing momentum with A^s u^m higher fractional} as
	\begin{equation*} \label{eq:higher fractional regularity velocity term}
		\begin{aligned}
			\lVert A^{\frac{s-1}{2}} P_m(u^m\cdot\nabla u^m)\rVert_{L^2} 
			&\lesssim 
			\lVert u^m\cdot\nabla u^m \rVert_{H^{s-1}} 
			\lesssim 
			\lVert u^m\cdot\nabla u^m \rVert_{W^{1,\frac{2}{3-s}}} \\
			&\lesssim
			\lVert (\nabla u^m)\cdot\nabla u^m \rVert_{L^{\frac{2}{3-s}}} + \lVert u^m\cdot \nabla\nabla u^m \rVert_{L^{\frac{2}{3-s}}} \\
			&\lesssim 
			\lVert \nabla u^m \rVert_{L^2} \lVert \nabla u^m \rVert_{L^{\frac{2}{2-s}}} + \lVert u^m \rVert_{L^{\frac{2}{2-s}}} \lVert D^2 u^m \rVert_{L^2} \\
			&\lesssim
			\lVert u^m \rVert_{H^1}^2 + \lVert u^m \rVert_{H^1} \lVert A u^m \rVert_{L^2} .
		\end{aligned}
	\end{equation*}
	Similarly,
	\begin{equation*} \label{eq:higher fractional regularity density term}
		\begin{aligned}
			\lVert A^{\frac{s-1}{2}} P_m(\rho^me_2)\rVert_{L^2} 
			\lesssim 
			\lVert A^{\frac{1}{2}} P_m(\rho^me_2)\rVert_{L^2}
            &\lesssim
            \lVert \rho^m \rVert_{H^1} \\
			&\le
			C\left(T, \lVert \rho_0^m \rVert_{H^1}, \lVert u_0^m \rVert_{H^s} \right)
			\le 
			C\left(T, \lVert \rho_0 \rVert_{H^1}, \lVert u_0 \rVert_{H^s} \right) .
		\end{aligned}
	\end{equation*}
	Using these estimates, along with~\eqref{eq:energy bound} and~\eqref{eq:higher order energy bounds}, we arrive at the desired bound by subjecting~\eqref{eq:testing momentum with A^s u^m higher fractional} to Gr\"onwall's inequality. Namely,
	\begin{equation} \label{eq:H^s energy bound higher fractional}
		\Vert{u^m}\Vert_{L^{\infty}H^s} + \lVert{u^m}\rVert_{L^2 H^{s+1}} 
		\le 
		C(T,\lVert u_0\rVert_{H^s},\lVert \rho_0\rVert_{H^1}) .
	\end{equation}
    Extracting appropriate subsequences, and observing that $(u_t,\rho_t)\in L^2 D(A^{\frac{s-1}{2}}) \times L^2 L^2$, we arrive at ${u\in C D(A^\frac{s}{2}) \cap L^2 D(A^{\frac{s+1}{2}})}$, while $\rho\in C H^1$. Section~\ref{sec:uniqueness of solutions} guarantees the uniqueness of the solution. This proves the case where $\rho_0\in H^{\lfloor s \rfloor} = H^1$. 
		
	Next, we use the solution $u$ from above and derive $H^s$ estimates for the density. We mirror the arguments in Section~\ref{sec:lower fractional regularity} to arrive at
	\begin{equation} \label{eq:testing density equation with high derivative}
		\frac{d}{dt}\Vert \Lambda^s \overline{\rho}^m \Vert_{L^2} \le \Vert \Lambda^s(\overline{u}^m\cdot\nabla\overline{\rho}^m) - \overline{u}^m\cdot\nabla\Lambda^s\overline{\rho}^m \Vert_{L^2} \lesssim \Vert \overline{u}^m \Vert_{H^{s+1}} \Vert \overline{\rho}^m \Vert_{H^s} .
	\end{equation}
	The last inequality is due to a commutator estimate proved in in~\cite[Theorem 1.2]{FMRR}. Using~\eqref{eq:H^s energy bound higher fractional} and Gr\"onwall's inequality, it is easy to see that 
	\begin{equation} \label{eq:H^s extended density bound higher fractional}
		\Vert \overline{\rho}^m(t) \Vert_{H^s} \le \Vert \overline{\rho_0}^m \Vert_{H^s} e^{C(T,\lVert u_0\rVert_{H^s},\lVert \rho_0\rVert_{H^1})} ,
	\end{equation}
	concluding the a~priori estimates for the higher fractional regularity. Time-continuity arguments, weak limits, and restriction of the solution to $\Omega$ then conclude the proof of higher fractional regularity. 
    \qed

\section{Asymptotic Properties --- Proof of Theorem~\ref{T05}} \label{sec:proof of asymptotic properties}
    
    We take the time derivative of~\eqref{eq:Leray projected momentum equation} and test it by $u_t$ obtaining
    \begin{equation} \label{eq:time derivative energy estimate}
        \frac{1}{2}\frac{d}{dt}\Vert u_t\Vert_{L^2}^2
        +\Vert \nabla u_t\Vert_{L^2}^2
        =
        -(u_t \cdot \nabla u, u_t) + (\rho_t e_2, u_t) .	
    \end{equation}
    For the first term on the right hand side, we employ Ladyzhenskaya's inequality and~\eqref{eq:energy bound} to write
    \begin{equation} \label{eq:time derivative energy estimate RHS 1}
        \begin{aligned}
            -(u_t \cdot \nabla u, u_t)
            =(u_t \cdot \nabla u_t, u)
            &\lec
            \Vert \nabla u_t\Vert_{L^2}\Vert u_t\Vert_{L^4}\Vert u\Vert_{L^4}
            \lec
            \Vert \nabla u_t\Vert_{L^2}^{\frac{3}{2}} \Vert u_t\Vert_{L^2}^{\frac{1}{2}} \Vert u\Vert_{L^2}^{\frac{1}{2}} \Vert \nabla u\Vert_{L^2}^{\frac{1}{2}}
            \\&\le
            \epsilon\Vert \nabla u_t\Vert_{L^2}^2 
            + C_{\epsilon}\Vert u_t\Vert_{L^2}^2 \Vert \nabla u\Vert_{L^2}^2 .
        \end{aligned}
    \end{equation}
    For the term involving $\rho_t$ in \eqref{eq:time derivative energy estimate}, we use the density equation to get
    \begin{equation} \label{eq:time derivative energy estimate RHS 2}
        \begin{aligned}
            (\rho_t e_2, u_t)
            =
            -(\nabla\cdot(\rho u)e_2, u_t)
            &=
            (\rho u, \nabla(u_t \cdot e_2)) \\
            &\le
            \Vert \nabla u_t\Vert_{L^2} \Vert \rho \Vert_{L^4} \Vert u\Vert_{L^4} 
            \le
            \epsilon\Vert \nabla u_t\Vert_{L^2}^2
            +
            C_{\epsilon} \Vert \nabla u\Vert_{L^2}^2 .
        \end{aligned}
    \end{equation}
    In the last step, we used Sobolev embedding,~\eqref{eq:density L^p norm}, and the Poincar\'e inequality. Now, absorbing the factors of $\Vert \nabla u_t\Vert_{L^2}^2$ and using Gronwall's inequality, we have
    \begin{equation} \label{eq:time derivative Gronwall's}
        \Vert u_t(t)\Vert_{L^2}^2
        +
        \int_0^t \Vert \nabla u_t\Vert_{L^2}^2 \,ds
        \lec
        \left( \Vert u_t(0)\Vert_{L^2}^2 + \int_0^t \Vert \nabla u\Vert_{L^2}^2 \,ds\right)
        \exp\left(
        \int_0^t \Vert \nabla u\Vert_{L^2}^2 \,ds
        \right).
    \end{equation}
    Thanks to~\eqref{eq:solution regularity for k=2} and~\eqref{eq:energy bound}, the right hand side of the above inequality is bounded uniformly in time. Therefore, we conclude that
    $\Vert \nabla u_t\Vert_{L^2} \to 0$ as $t \to \infty$. This implies, from the velocity equation and~\eqref{eq:T00 auxiliary estimates  for velocity}, that
    \begin{equation} \label{eq:Au - P(rho e_2) goes to 0 in V}
        \Vert Au - \mathbb{P}(\rho e_2)\Vert_{V} 
        \lec 
        \Vert \nabla u_t\Vert_{L^2}+\Vert u \cdot \nabla u\Vert_{H^1} \xrightarrow[]{t\to\infty} 0 .
    \end{equation}
    Next, recalling \eqref{eq:projection1}, we apply $Q$ to \eqref{eq:boussinesq equations}$_1$ obtaining
    \begin{equation} \label{eq:Q on momentum equation}
       Q(\nabla p) - Q(\rho e_2)= -Q(u\cdot \nabla u) ,
    \end{equation}
    since $\div(u_t + \Delta u) = 0$. Using the continuity of $Q$ on $H^1$ and~\eqref{eq:T00 auxiliary estimates for velocity}, we conclude that the right-hand side of \eqref{eq:Q on momentum equation} converges strongly to $0$ in $H^1 \cap H'$,
    proving the first part of Theorem~\ref{T05}. We now proceed to the second part, which concerns an asymptotic steady state for the density.
    
    \textit{(i) $\Rightarrow$ (ii)}: Owing to the continuity of $\mathbb{P}$ on $L^2$, and because $\mathbb{P}(\rho e_2)$ weakly converges to $0$ in $L^2$, we have $\lim_{t \to \infty} \Vert \mathbb{P}(\rho e_2)\Vert_{L^2} = 0$. Since $\mathbb{P}$ and $I - \mathbb{P}$ are orthogonal, we conclude that $(I-\mathbb{P})(\rho e_2)$ converges strongly to $\bar{\rho}e_2$ in $L^2$ as $t \to \infty$.

    \textit{(ii) $\Rightarrow$ (i)}: Observe that the conservation of the $L^2$ norm of $\rho e_2$ and the orthogonality
    of $\mathbb{P}$ and $I - \mathbb{P}$ imply that
    \begin{align}
        \Vert \rho_0 e_2 \Vert_{L^2}^2 
        = \Vert \rho e_2\Vert_{L^2}^2
        = \Vert \mathbb{P}(\rho e_2)\Vert_{L^2}^2
        + \Vert (I - \mathbb{P})(\rho e_2)\Vert_{L^2}^2.
    \end{align}
    Therefore, taking the limit as $t \to \infty$, and using the assumption that $\Vert \bar{\rho} \Vert_{L^2} = \Vert \rho_0 \Vert_{L^2}$,
    we conclude that $\mathbb{P}(\rho e_2)$ converges strongly to $0$ in $L^2$. 
    Since $\rho e_2 = \mathbb{P}(\rho e_2) + (I - \mathbb{P})(\rho e_2)$,
    we obtain $\lim_{t \to \infty} \rho e_2 = \bar{\rho}e_2$.

    Finally, we assume $(i)$ and establish \eqref{eq:asy2}. From the aforementioned arguments, we necessarily have
    $\lim_{t \to \infty} \mathbb{P}(\rho e_2) = 0$ in $L^2$. Thus, combining this with \eqref{eq:T00 auxiliary estimates  for velocity}$_2$, we obtain $\lim_{t \to \infty} \lVert Au \rVert_{L^2} = 0$. Furthermore, since $\Vert \Delta u\Vert_{L^2} \lec \Vert Au\Vert_{L^2}$,~\eqref{eq:boussinesq equations}$_1$ yields
    \begin{equation}
        \Vert \nabla p - \rho e_2 \Vert_{L^2} \lesssim \Vert u_t\Vert_{L^2} + \Vert \Delta u\Vert_{L^2} + \Vert u\cdot\nabla u\Vert_{L^2} \xrightarrow[t\to\infty]{} 0 .
    \end{equation}
    From here, we arrive at~\eqref{eq:asy2} since $\rho \to \overline{\rho}$ as $t\to\infty$.
    \qed






\section*{Acknowledgments}
    MSA was supported in part by the NSF grant DMS-2205493. Part of this material is based upon work supported by the NSF under Grant No.~DMS-1928930 while MSA was in residence at the SLMATH in Berkeley, California, during the summer of 2023. Both authors are grateful to Juhi Jang and Igor Kukavica for helpful discussions.

\end{document}